
\documentclass[letterpaper, 10pt, conference]{ieeeconf}      

\IEEEoverridecommandlockouts                              
\overrideIEEEmargins

\usepackage{amsmath,graphicx,amsfonts,amssymb,epsfig,mathrsfs}
\usepackage{subcaption}
\usepackage{color}
\usepackage{multirow}
\usepackage{rotating}
\usepackage{graphicx}%
\usepackage{algorithm}
\usepackage{algpseudocode}
\usepackage{algorithmicx}
\usepackage{cite}
\usepackage{url}
\usepackage{framed}
\usepackage{bm}

\setlength{\tabcolsep}{1.1pt}

\newtheorem{theorem}{Theorem}
\newtheorem{corollary}[theorem]{Corollary}

\newtheorem{lemma}{Lemma}
\newtheorem{proposition}{Proposition}
\newtheorem{remark}{Remark}

\renewcommand{\det}{{\mathrm{det}}}
\newcommand{\tr}{{\mathrm{trace}}}
\newcommand{\spec}{{\mathrm{spec}}}
\newcommand{\vol}{{\mathrm{vol}}}

\title{\LARGE \bf
On the Parameterized Computation of Minimum Volume Outer Ellipsoid of Minkowski Sum of Ellipsoids
}


\author{Abhishek Halder
\thanks{Abhishek Halder is with the Department of Applied Mathematics and Statistics, University of California, Santa Cruz, CA 95064, USA,
        {\tt\small{ahalder@ucsc.edu}}%
}}

\begin{document}

\maketitle
\thispagestyle{empty}
\pagestyle{empty}

\begin{abstract}
We consider the problem of computing certain parameterized minimum volume outer ellipsoidal (MVOE) approximation of the Minkowski sum of a finite number of ellipsoids. We clarify connections among several parameterizations available in the literature, obtain novel analysis results regarding the conditions of optimality, and based on the same, propose two new algorithms for computing the parameterized MVOE. Numerical results reveal faster runtime for the proposed algorithms than the state-of-the-art semidefinite programming approach of computing the same.
\end{abstract}


\section{Introduction}
The Minkowski sum of two sets $\mathcal{X}$ and $\mathcal{Y}$, which we denote by $\mathcal{Z}=\mathcal{X} \dot{+} \mathcal{Y}$, is the set \begin{eqnarray}
\mathcal{Z}:=\{z \mid z = x+y, x\in\mathcal{X}, y\in\mathcal{Y}\}.
\label{MinkowskiSumDefn}
\end{eqnarray}
The Minkowski sum in general, and the Minknowski sum of ellipsoids in particular, appear frequently in systems, control and robotics applications. As a motivating example, consider computing the reach set of a linear control system:
\begin{eqnarray}
	\bm{x}^{+}(t) = \bm{F}(t)\bm{x}(t) + \bm{G}(t)\bm{u}(t), \quad \bm{x}\in\mathbb{R}^{n}, \quad \bm{u}\in\mathbb{R}^{m},
\label{LTIsys}	
\end{eqnarray}
subject to set-valued uncertainties in its initial conditions $\bm{x}(t_{0}) \in \mathcal{X}_{0}$, or final conditions $\bm{x}(t_{1}) \in \mathcal{X}_{1}$, and control $\bm{u}(t) \in \mathcal{U}(t)$. For continuous time case, $\bm{x}^{+}(t) := \dot{\bm{x}}(t)$, and for discrete time case, $\bm{x}^{+}(t) := \bm{x}(t+1)$. We assume that the sets $\mathcal{X}_{0},\mathcal{X}_{1} \subset \mathbb{R}^{n}$ are compact, and so are the sets $\mathcal{U}(t) \in \mathbb{R}^{m}$ for all $t$. 

Let us denote the \emph{forward} reach set at time $t$ starting from an initial set $\mathcal{X}_{0}$ at time $t_{0} < t$, with feasible control sets $\mathcal{U}(t)$, as $\overrightarrow{\mathcal{R}}\left(\mathcal{X}_{0}, t, t_{0}\right)$. Likewise, denote the \emph{backward} reach set at time $t$ starting from a terminal set $\mathcal{X}_{1}$ at time $t_{1} > t$, with feasible control sets $\mathcal{U}(t)$, as $\overleftarrow{\mathcal{R}}\left(\mathcal{X}_{1}, t, t_{1}\right)$. In words, the forward (resp. backward) reach set at time $t$ is the set of all states that can be achieved at that time via dynamics (\ref{LTIsys}) starting from an initial set $\mathcal{X}_{0}$ (resp. terminal set $\mathcal{X}_{1}$) at time $t_{0}$ (resp. at time $t_{1}$).

\begin{figure}[t]
\centering
\includegraphics[width=.48\textwidth]{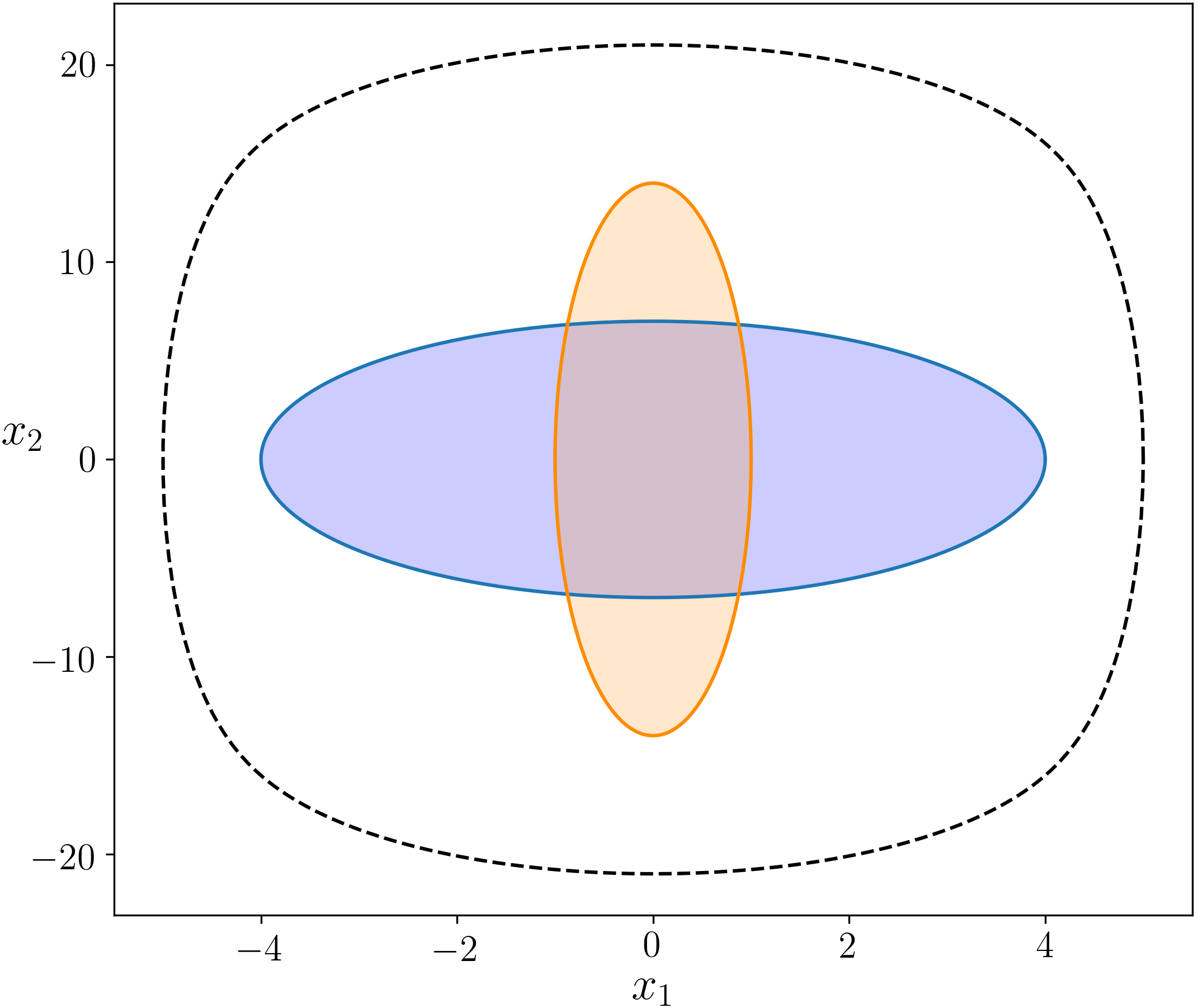}
\caption{The Minkowski sum of two ellipsoids is not an ellipsoid in general, as shown in 2D for two axes-aligned ellipses (\emph{solid} boundaries). The Minkowski sum is the set shown with \emph{dashed} boundary.}
\vspace*{-0.19in}
\label{2DMinkowskiSumEllipseIntroFig}
\end{figure}

In continuous time, we have
{\small{\begin{subequations}
	\begin{align}
		&\overrightarrow{\mathcal{R}}\left(\mathcal{X}_{0}, t, t_{0}\right)	= \bm{\Phi}\left(t,t_{0}\right) \mathcal{X}_{0} \:\dot{+} \displaystyle\int_{t_{0}}^{t} \bm{\Phi}\left(t,\tau\right) \bm{G}(\tau)\: \mathcal{U}(\tau) \:\mathrm{d}\tau, \label{ContTimeForwardReachSet}\\
		&\overleftarrow{\mathcal{R}}\left(\mathcal{X}_{1}, t, t_{1}\right)	= \bm{\Phi}\left(t,t_{1}\right) \mathcal{X}_{1} \:\dot{+} \displaystyle\int_{t_{1}}^{t} \bm{\Phi}\left(t,\tau\right) \bm{G}(\tau)\: \mathcal{U}(\tau) \:\mathrm{d}\tau,\label{ContTimeBackwardReachSet}
	\end{align}
\label{ContTimeReachSets}
\end{subequations}}}
and in discrete time,
{\small{\begin{subequations}
	\begin{align}
		&\overrightarrow{\mathcal{R}}\left(\mathcal{X}_{0},t,t_{0}\right)	= \bm{\Phi}\left(t,t_{0}\right) \mathcal{X}_{0} \:\dot{+}\! \displaystyle\sum_{\tau=t_{0}}^{t-1} \!\bm{\Phi}\left(t,\tau+1\right) \bm{G}(\tau)\: \mathcal{U}(\tau),\label{DiscreteTimeForwardReachSet}\\
		&\overleftarrow{\mathcal{R}}\left(\mathcal{X}_{1}, t, t_{1}\right)	=\bm{\Phi}\left(t,t_{1}\right) \mathcal{X}_{1} \:\dot{+}\displaystyle\sum_{\tau=t}^{t_{1}-1} \!-\bm{\Phi}\left(t,\tau\right) \bm{G}(\tau)\: \mathcal{U}(\tau),
\label{DiscreteTimeBackwardReachSet}
\end{align}
\label{DiscreteTimeReachSets}
\end{subequations}}}
wherein $\bm{\Phi}\left(\cdot,\cdot\right)$ is the state transition matrix associated with (\ref{LTIsys}), and the integrals that follow the Minkowski sums in (\ref{ContTimeReachSets}) are Aumann integrals \cite{Aumann1965}. 

In order to numerically compute the reach sets (\ref{ContTimeReachSets}) or (\ref{DiscreteTimeReachSets}) at any desired time $t$, parametric description of the sets $\mathcal{X}_{0}, \mathcal{X}_{1}$ and $\mathcal{U}(t)$ are sought in practice. It is quite natural to describe the sets $\mathcal{X}_{0}, \mathcal{X}_{1}$ and $\mathcal{U}(t)$ as ellipsoids since they model structured weighted norm bounded uncertainties in initial conditions, terminal conditions and controls, respectively. Given such ellipsoidal set valued description of uncertainties, (\ref{ContTimeReachSets}) requires computing the Minkowski sum of an ellipsoid with an ellipsoidal set valued integral\footnote{the ellipsoidal set valued integral is guaranteed to be convex but may not be an ellipsoid. However, it can be tightly inner and outer approximated by (respectively) unique ellipsoids \cite{john2014extremum,busemann1950foundations}.}, and (\ref{DiscreteTimeReachSets}) requires computing the Minkowski sum of a finite number of ellipsoids. In this vein, a plethora of results have appeared in the systems-control literature based on ellipsoidal calculus \cite{reshetnyak1989summation,KurzhanskiValyi1997,ros2002ellipsoidal,KurzhanskiyVaraiyaCDC2006}.

In robotics, Minkowski sums appear in motion planning problems \cite{LatombeBook} as they quantify the so called ``configuration space obstacle". For safety purposes, it is common \cite{choi2009continuous,best2016real,yan2016path} to engulf the robots and obstacles by ellipsoids before checking collision avoidance. Here, ellipsoidal descriptions are preferred over other convex shapes (e.g. polytopes) for relative computational ease -- an arbitrary ellipsoid in $\mathbb{R}^{n}$ can be parameterized by $\frac{n(n+3)}{2}$ reals\footnote{we need $n$ reals for the center vector, and $\frac{n(n+1)}{2}$ reals for the symmetric shape matrix} describing its center and shape. This fixed parameterization complexity is helpful, for example, in designing communication protocols for multi-agent collision avoidance, where the ellipsoidal descriptions may need to be encoded in communication packets.

While the Minkowski sum of convex sets is convex, the Minkowski sum of ellipsoids is not ellipsoid in general (Fig. \ref{2DMinkowskiSumEllipseIntroFig}). In simple cases like the one shown in Fig. \ref{2DMinkowskiSumEllipseIntroFig}, one can find  explicit formula for the boundary of the Minkowski sum by computing the convolution boundary (see e.g., \cite[Example 2.1]{lee1998polynomial}), but this approach is computationally tedious for higher dimensions and non-axes aligned cases. Recently, a parametric formula for the boundary of the Minkowski sum of two ellipsoids in $\mathbb{R}^{n}$ was obtained in \cite{yan2015closed}. In control and robotics applications, it is common, and for the computational benefits mentioned above, may in fact be desirable, to instead compute a \emph{tight outer ellipsoidal approximation} of the Minkowski sum of ellipsoids. The qualifier ``outer" is motivated by guaranteeing provable safety, while the qualifier ``tight" is motivated by reducing conservatism in the outer approximation, where ``tightness" is promoted by minimizing the size of the outer approximating ellipsoid. Typical measures of size used as optimality criterion \cite{KurzhanskiValyi1997,durieu2001multi} include the sum of the squared semi-axes, and the volume of the ellipsoid. In this paper, we will consider the problem of computing the \emph{minimum volume outer ellipsoid} (MVOE) of the Minkowski sum of ellipsoids. Problems involving MVOE for a given set have appeared before in the context of state estimation with norm-bounded disturbances \cite{schweppe1968recursive,bertsekas1971recursive,schlaepfer1972continuous}, and in the context of system identification \cite{fogel1979system,belforte1990parameter,kosut1992set}.

The purpose of this paper is threefold:
\begin{enumerate}
\item to clarify the connections between several existing results in the literature for an outer ellipsoidal parameterization that contains the Minkowski sum of two given ellipsoids,
\item to provide novel analysis results for the minimum volume condition of optimality,
\item to propose new numerical algorithms based on the above analysis, for computing the MVOE of the Minkowski sum of two ellipsoids.
\end{enumerate}

This paper is organized as follows. In Section II, we collect several existing outer ellipsoidal parameterizations from the literature, which are guaranteed to contain the Minkowski sum of the constituent ellipsoids, show their equivalence and set up the parameterized MVOE problem. Section III contains novel analysis results for the same. In Section IV, we build on the results of Section III, and design two new algorithms for solving the associated MVOE problem. Numerical simulations are given in Section V to elucidate the proposed algorithms. Section VI concludes the paper.

\subsection*{Preliminaries}

\subsubsection{Notations}
$\mathbb{R}^{d}$ stands for the Euclidean $d$-dimensional vector space, and $\mathbb{R}^{d}_{+}$ denotes its positive orthant. We use $\mathcal{B}_{1}^{d}$ to denote the $d$-dimensional Eulcidean unit ball, $\bm{1}$ to denote the vector of ones of appropriate dimension, $\mathbb{N}$ for the set of natural numbers, and $\mathbb{S}_{+}^{d}$ for the cone of real symmetric positive definite matrices of size $d\times d$. Furthermore, $\tr(\cdot)$, $\det(\cdot)$, and $\spec(\cdot)$ respectively denote the trace, determinant and spectrum of a matrix. We use $\vol(\cdot)$ to denote volume, ${\rm{min}}(\cdot,\cdot)$ to denote pointwise minimum, and $\Gamma(\cdot)$ to denote the Gamma function. The notation $f^{n}(x)$ stands for $n$-fold composition of the function $f$ evaluated at $x$, i.e., \[f^{n}(x)\equiv \underbrace{f(f(\hdots f(f}_{\text{$n$ times}}(x))\hdots)).\]

\subsubsection{Ellipsoids}
An ellipsoid with center $\bm{q}\in\mathbb{R}^{d}$ and shape matrix $\bm{Q} \in \mathbb{S}_{+}^{d}$, is denoted by \[\mathcal{E}\left(\bm{q}, \bm{Q}\right) := \{\bm{x} \in \mathbb{R}^{d} : \left(\bm{x} - \bm{q}\right)^{\top}\bm{Q}^{-1}\left(\bm{x} - \bm{q}\right) \leq 1\}.\] The square roots of eigenvalues of $\bm{Q}$ are the lengths of semi-axes of $\mathcal{E}$. Notice that $\bm{Q} \in \mathbb{S}_{+}^{d} \Rightarrow \bm{Q}^{-1},\bm{Q}^{-\frac{1}{2}} \in \mathbb{S}_{+}^{d}$. Let $\bm{N}:=\bm{Q}^{-\frac{1}{2}}$, and let $\bm{L}\bm{L}^{\top}$ be the Cholesky decomposition of $\bm{Q}^{-1}$. Then $\bm{Q}^{-1} = \bm{NN} = \bm{L}\bm{L}^{\top}$. Hence, alternative parameterizations of $\mathcal{E}\left(\bm{q}, \bm{Q}\right)$ are $\mathcal{E}\left(\bm{q}, \bm{N}\right) := \{\bm{x} \in \mathbb{R}^{d} : \parallel\bm{N}\left(\bm{x} - \bm{q}\right)\parallel_{2} \leq 1\}$, and $\mathcal{E}\left(\bm{q}, \bm{L}\right) := \{\bm{x} \in \mathbb{R}^{d} : \parallel\bm{L}^{\top}\left(\bm{x} - \bm{q}\right)\parallel_{2} \leq 1\}$. Another way to express a $d$-dimensional ellipsoid is to view it as the image of an affine transformation of $\mathcal{B}_{1}^{d}$, i.e., $\mathcal{E}\left(\bm{q},\bm{M}\right) := \{\bm{M}\bm{v} + \bm{q} \: : \bm{v}\in\mathbb{R}^{d}, \parallel \bm{v} \parallel_{2} \leq 1\}$, where $\bm{M}:=\bm{Q}^{\frac{1}{2}} = \bm{N}^{-1}$. 

Yet another ellipsoidal parameterization that will appear in the later part of this paper, is via a matrix-vector-scalar triple $(\bm{A},\bm{b},c)$ encoding the quadratic form, i.e., $\mathcal{E}(\bm{A},\bm{b},c) := \{\bm{x} \in \mathbb{R}^{d} : \bm{x}^{\top}\bm{A}\bm{x} + 2\bm{x}^{\top}\bm{b} + c \leq 0\}$. The following relations among $(\bm{A},\bm{b},c)$ and $(\bm{q},\bm{Q})$ parameterizations will be useful:
\begin{eqnarray}
	\bm{A} = \bm{Q}^{-1},\quad \bm{b} = -\bm{Q}^{-1}\bm{q},\quad c = \bm{q}^{\top}\bm{Q}^{-1}\bm{q} - 1,
\label{Qq2Abc}	
\end{eqnarray}
and
\begin{eqnarray}
\bm{Q} = \bm{A}^{-1}, \quad \bm{q} = -\bm{Q}\bm{b}.
\label{Ab2Qq}	
\end{eqnarray}
Furthermore, we have \[\vol\left(\mathcal{E}\left(\bm{q},\bm{Q}\right)\right) = \frac{\vol\left(\mathcal{B}_{1}^{d}\right)}{\sqrt{\det\left(\bm{Q}^{-1}\right)}} = \frac{\pi^{\frac{d}{2}}}{\Gamma\left(\frac{d}{2} + 1\right)}\sqrt{\det\left(\bm{Q}\right)}.\]


\section{Ellipsoid that Contains the Minkowski Sum}
The Minkowski sum of ellipsoids being compact and convex, has unique MVOE \cite{john2014extremum,busemann1950foundations}, known as the \emph{L\"{o}wner-John ellipsoid} $\mathcal{E}_{\rm{LJ}}:=\mathcal{E}(\bm{q}_{\rm{LJ}},\bm{Q}_{\rm{LJ}})$. Specifically, consider $K$ given ellipsoids $\{\mathcal{E}_{k}\}_{k=1}^{K}$ in $\mathbb{R}^{d}$, where $\mathcal{E}_{k} := \mathcal{E}(\bm{q}_{k},\bm{Q}_{k})$. It is easy to see that the MVOE of the Minkowski sum
\begin{eqnarray}
\mathcal{E}_{1} \:\dot{+}\: \mathcal{E}_{2} \:\dot{+}\: \hdots\: \dot{+}\: \mathcal{E}_{K}
\label{MinkSumEllip}	
\end{eqnarray}
has center 
\begin{eqnarray}
\bm{q}_{\rm{LJ}} = \bm{q}_{1} + \bm{q}_{2} + \hdots + \bm{q}_{K}.
\label{centerMinkSum}	
\end{eqnarray}
While no general formula for $\bm{Q}_{\rm{LJ}}$ is known as a function of $\bm{Q}_{1}, \hdots, \bm{Q}_{K}$, it is computationally easier to construct a parameterized family of outer ellipsoids containing the Minkowski sum by first constructing certain parametric function of $\bm{Q}_{1}, \hdots, \bm{Q}_{K}$, and then optimizing over the parameter. In fact, one can find a parameterization that is known to be inclusion minimal external estimate of (\ref{MinkSumEllip}) (see \cite[p. 112, Thm. 2.2.1]{KurzhanskiValyi1997}). In the following, we collect such parameterizations that have appeared in the literature, and show their equivalence.

\subsection{Equivalent Parameterizations}
An outer parameterization of the shape matrix $\bm{Q}$, such that the corresponding ellipsoid is guaranteed to contain the Minkowski sum (\ref{MinkSumEllip}) with respective shape matrices $\bm{Q}_{1}, \hdots, \bm{Q}_{K}$, is given by the Kurzhanski parameterization \cite{KurzhanskiValyi1997,kurzhanski2002reachability}:
\begin{eqnarray}
\bm{Q}(\bm{\ell}) = \left(\displaystyle\sum_{k=1}^{K} \sqrt{\bm{\ell}^{\top}\bm{Q}_{k}\bm{\ell}}\right) \displaystyle\sum_{k=1}^{K}\frac{\bm{Q}_{k}}{\sqrt{\bm{\ell}^{\top}\bm{Q}_{k}\bm{\ell}}},
\label{KurzhanskyQl}	
\end{eqnarray}
where the parameterization variable is unit vector $\bm{\ell}\in\mathbb{R}^{d}$, $\bm{\ell}^{\top}\bm{\ell}=1$.

Durieu, Walter and Polyak \cite{durieu2001multi} proposed a similar parameterization: 
\begin{eqnarray}
\bm{Q}(\bm{\alpha}) = \displaystyle\sum_{k=1}^{K} \alpha_{k}^{-1} \bm{Q}_{k}
\label{DWPQalpha}	
\end{eqnarray}
where $\bm{\alpha}\in\mathbb{R}^{K}_{+}$, $\bm{1}^{\top}\bm{\alpha}=1$.

Thanks to the transitive nature of these parameterizations, the optimal parameterization in some prescribed sense, can be found through a pairwise recursion over the constituent ellipsoids (see e.g., recursion (44) in \cite{durieu2001multi}). Therefore, it suffices to consider the $K=2$ case.

In \cite{schweppe1973,Chernousko1980PartI,MaksarovNorton1996,KurzhanskiValyi1997}, the following parameterization for $K=2$ was given:
\begin{eqnarray}
\bm{Q}\left(\beta\right) = \left(1 + \frac{1}{\beta}\right)\bm{Q}_{1} \: + \: \left(1 + \beta\right)\bm{Q}_{2}.
\label{SchweppeKVQbeta}	
\end{eqnarray}
We notice that the parameterization (\ref{SchweppeKVQbeta}) can be obtained from parameterization (\ref{KurzhanskyQl}) via the transformation 
\begin{eqnarray}
\sqrt{\displaystyle\frac{\bm{\ell}^{\top}\bm{Q}_{1}\bm{\ell}}{\bm{\ell}^{\top}\bm{Q}_{2}\bm{\ell}}} \mapsto \beta.
\label{betaFromell}
\end{eqnarray}
Furthermore, we can get parameterization (\ref{SchweppeKVQbeta}) from parameterization (\ref{DWPQalpha}) by setting
\begin{eqnarray}
\alpha_{2} = 1 - \alpha_{1}, \quad \displaystyle\frac{\alpha_{1}}{1 - \alpha_{1}} \mapsto \beta .
\label{betaFromalpha}	
\end{eqnarray}

\subsection{Minimum Volume Parametric Optimization}
Given a scalar $\beta>0$, and a pair of matrices $\bm{Q}_{1},\bm{Q}_{2}\in\mathbb{S}^{d}_{+}$, let $\bm{Q}\left(\beta\right) := \left(1 + \frac{1}{\beta}\right)\bm{Q}_{1} \: + \: \left(1 + \beta\right)\bm{Q}_{2}$, as in (\ref{SchweppeKVQbeta}). Clearly, $\bm{Q}(\beta)\in\mathbb{S}^{d}_{+}$. In the following, we will study the parametric optimization problem 
\begin{eqnarray}
\underset{\beta>0}{\text{minimize}} \: \log\det\left(\bm{Q}\left(\beta\right)\right)
\label{SuperEllipsoidForMVE2}	
\end{eqnarray}
that corresponds to the minimum volume criterion. 

We mention here that instead of minimizing the volume, if one minimizes the sum of squared semi-axes lengths (which amounts to replacing the objective function in (\ref{SuperEllipsoidForMVE2}) by $\tr(\bm{Q}\left(\beta\right))$), then the optimal $\beta>0$ can be found analytically:
\[\beta = \sqrt{\displaystyle\frac{\tr\left(\bm{Q}_{1}\right)}{\tr\left(\bm{Q}_{2}\right)}}.\]  
Next, we analyze the optimality conditions for (\ref{SuperEllipsoidForMVE2}).


\section{Analysis}

\subsection{Optimality Condition}
Letting $\bm{R} := \bm{Q}_{1}^{-1}\bm{Q}_{2}$, notice from (\ref{SchweppeKVQbeta}) that
\begin{eqnarray}
&&\frac{\partial}{\partial\beta}\bm{Q}(\beta) = -\frac{1}{\beta^{2}}\bm{Q}_{1}\left(\bm{I} - \beta^{2}\bm{R}\right), \\
&&(\bm{Q}(\beta))^{-1} = \frac{\beta}{1+\beta}\left(\bm{I} + \beta\bm{R}\right)^{-1}\bm{Q}_{1}^{-1},
\label{DerivativeAndInverseOfQ}	
\end{eqnarray}
and we thus get
\begin{eqnarray}
\displaystyle\frac{\partial}{\partial\beta}\log\det\left(\bm{Q}(\beta)\right) = \tr\left((\bm{Q}(\beta))^{-1}\displaystyle\frac{\partial}{\partial\beta}\bm{Q}(\beta)\right)\nonumber\\
= -\displaystyle\frac{1}{\beta(1+\beta)}\:\tr\left(\left(\bm{I} + \beta\bm{R}\right)^{-1} \left(\bm{I} - \beta^{2}\bm{R}\right)\right).
\label{FOOC}	
\end{eqnarray}
To proceed further, we need the following lemma.
\begin{lemma}
Given symmetric matrices $\bm{M}_{1}$ and $\bm{M}_{2}$, if $\bm{M}_{1}$ is positive definite, then the product $\bm{M}_{1}\bm{M}_{2}$ is diagonalizable.
\label{AisSPDandBisS}	
\end{lemma}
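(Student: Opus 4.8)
The plan is to reduce the product $\bm{M}_{1}\bm{M}_{2}$ to a symmetric matrix by a similarity transformation, after which the spectral theorem finishes the argument. Since $\bm{M}_{1}$ is symmetric positive definite, it admits a unique symmetric positive definite square root $\bm{M}_{1}^{\frac{1}{2}}\in\mathbb{S}_{+}^{d}$, which in particular is invertible with $\bm{M}_{1}^{-\frac{1}{2}}$ also symmetric. This is the only place positive definiteness is used, and it is essential: it is what guarantees the square root exists, is real, and is invertible.

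Next I would introduce the auxiliary matrix $\bm{S} := \bm{M}_{1}^{\frac{1}{2}}\bm{M}_{2}\bm{M}_{1}^{\frac{1}{2}}$ and observe that $\bm{S}^{\top} = \bm{M}_{1}^{\frac{1}{2}}\bm{M}_{2}^{\top}\bm{M}_{1}^{\frac{1}{2}} = \bm{M}_{1}^{\frac{1}{2}}\bm{M}_{2}\bm{M}_{1}^{\frac{1}{2}} = \bm{S}$, using symmetry of $\bm{M}_{2}$ and of $\bm{M}_{1}^{\frac{1}{2}}$. Hence $\bm{S}$ is a real symmetric matrix, so by the spectral theorem it is orthogonally diagonalizable: there exist an orthogonal $\bm{U}$ and a real diagonal $\bm{\Lambda}$ with $\bm{S} = \bm{U}\bm{\Lambda}\bm{U}^{\top}$.

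Finally, I would exhibit the similarity explicitly. Writing $\bm{M}_{1}\bm{M}_{2} = \bm{M}_{1}^{\frac{1}{2}}\left(\bm{M}_{1}^{\frac{1}{2}}\bm{M}_{2}\bm{M}_{1}^{\frac{1}{2}}\right)\bm{M}_{1}^{-\frac{1}{2}} = \bm{M}_{1}^{\frac{1}{2}}\,\bm{S}\,\bm{M}_{1}^{-\frac{1}{2}}$, we see that $\bm{M}_{1}\bm{M}_{2}$ is similar to $\bm{S}$, hence similar to $\bm{\Lambda}$ via the invertible matrix $\bm{M}_{1}^{\frac{1}{2}}\bm{U}$. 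Therefore $\bm{M}_{1}\bm{M}_{2}$ is diagonalizable, and as a byproduct its spectrum is real and equals $\spec(\bm{S})$.

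There is no real obstacle here; the only point requiring care is to justify that $\bm{M}_{1}^{\frac{1}{2}}$ is simultaneously symmetric and invertible, which is precisely what $\bm{M}_{1}\in\mathbb{S}_{+}^{d}$ provides. If one wanted to avoid square roots, an alternative is to use the Cholesky-type factorization $\bm{M}_{1} = \bm{L}\bm{L}^{\top}$ and note $\bm{M}_{1}\bm{M}_{2}$ is similar to the symmetric matrix $\bm{L}^{\top}\bm{M}_{2}\bm{L}$ via $\bm{L}$; either route works equally well.
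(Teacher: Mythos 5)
Your argument is correct and follows essentially the same route as the paper: conjugating $\bm{M}_{1}\bm{M}_{2}$ by the symmetric positive definite square root $\bm{M}_{1}^{\frac{1}{2}}$ yields the symmetric matrix $\bm{M}_{1}^{\frac{1}{2}}\bm{M}_{2}\bm{M}_{1}^{\frac{1}{2}}$, so diagonalizability follows from similarity to a symmetric matrix. The extra remarks (real spectrum, Cholesky alternative) are fine but not needed.
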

\begin{proof}
Since $\bm{M}_{1}\in\mathbb{S}_{+}^{d}$, there exists a unique matrix $\bm{N}_{1}\in\mathbb{S}_{+}^{d}$ such that $\bm{N}_{1}\bm{N}_{1} = \bm{M}_{1} \Leftrightarrow \bm{N}_{1} = \bm{M}_{1}^{\frac{1}{2}}$. In words, $\bm{N}_{1}$ is the unique symmetric positive definite square root of $\bm{M}_{1}$. Now observe that 
\begin{eqnarray*}
\bm{N}_{1}^{-1}\bm{M}_{1}\bm{M}_{2}\bm{N}_{1} = 	\bm{M}_{1}^{\frac{1}{2}}\bm{M}_{2}\bm{M}_{1}^{\frac{1}{2}},
\end{eqnarray*}
where the right-hand-side is symmetric since both $\bm{M}_{1}^{\frac{1}{2}}$ and $\bm{M}_{2}$ are symmetric, thereby demonstrating that $\bm{M}_{1}\bm{M}_{2}$ is similar to a symmetric matrix, and hence diagonalizable.	
\end{proof}

Lemma \ref{AisSPDandBisS} has the following consequence.

\begin{proposition}
The matrix $\bm{R}$ is diagonalizable. 
\label{RisDiagonalizable}	
\end{proposition}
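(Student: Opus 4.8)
The plan is to apply Lemma~\ref{AisSPDandBisS} directly, after recognizing that $\bm{R} := \bm{Q}_{1}^{-1}\bm{Q}_{2}$ has exactly the structure required by that lemma. First I would set $\bm{M}_{1} := \bm{Q}_{1}^{-1}$ and $\bm{M}_{2} := \bm{Q}_{2}$. Since $\bm{Q}_{1} \in \mathbb{S}_{+}^{d}$, we have $\bm{Q}_{1}^{-1} \in \mathbb{S}_{+}^{d}$ (as noted in the Preliminaries), so $\bm{M}_{1}$ is symmetric positive definite. Likewise $\bm{M}_{2} = \bm{Q}_{2} \in \mathbb{S}_{+}^{d} \subset \mathbb{S}^{d}$ is symmetric. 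Then $\bm{R} = \bm{M}_{1}\bm{M}_{2}$ is the product of a symmetric positive definite matrix with a symmetric matrix, so Lemma~\ref{AisSPDandBisS} immediately yields that $\bm{R}$ is diagonalizable.

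For completeness I might also spell out the similarity transformation explicitly in this specific case, mirroring the proof of the lemma: with $\bm{N} := \bm{Q}_{1}^{-1/2} \in \mathbb{S}_{+}^{d}$, one has $\bm{N}^{-1}\bm{R}\,\bm{N} = \bm{Q}_{1}^{1/2}\bm{Q}_{1}^{-1}\bm{Q}_{2}\bm{Q}_{1}^{-1/2} = \bm{Q}_{1}^{-1/2}\bm{Q}_{2}\bm{Q}_{1}^{-1/2}$, which is symmetric (indeed, it lies in $\mathbb{S}_{+}^{d}$ since $\bm{Q}_{2} \in \mathbb{S}_{+}^{d}$), hence orthogonally diagonalizable; therefore $\bm{R}$ itself is diagonalizable, with the same (real, here strictly positive) eigenvalues. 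This stronger observation — that the eigenvalues of $\bm{R}$ are real and positive — is worth recording, since it is presumably what makes the subsequent analysis of the first-order optimality condition \eqref{FOOC} tractable (the matrix $(\bm{I} + \beta\bm{R})^{-1}(\bm{I} - \beta^{2}\bm{R})$ can then be understood spectrally).

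There is essentially no obstacle here: the entire content of the proposition is a pattern-matching application of the preceding lemma, so the only "work" is to verify that the hypotheses are met, namely that inverting a symmetric positive definite matrix keeps it in $\mathbb{S}_{+}^{d}$, which the paper has already stated. The one thing to be careful about is the order of the factors: I must take $\bm{M}_{1} = \bm{Q}_{1}^{-1}$ (not $\bm{Q}_{1}$) and $\bm{M}_{2} = \bm{Q}_{2}$, so that the product $\bm{M}_{1}\bm{M}_{2}$ equals $\bm{R}$ as defined, and I should note that the lemma's conclusion is invariant under swapping the roles since $\bm{Q}_{2}^{-1}\bm{Q}_{1}$ (the inverse of $\bm{R}$) would also be covered — but the clean statement uses $\bm{M}_{1}$ positive definite, which $\bm{Q}_{1}^{-1}$ is.

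\begin{proof}
Apply Lemma~\ref{AisSPDandBisS} with $\bm{M}_{1} := \bm{Q}_{1}^{-1}$ and $\bm{M}_{2} := \bm{Q}_{2}$. Since $\bm{Q}_{1}\in\mathbb{S}_{+}^{d}$, we have $\bm{Q}_{1}^{-1}\in\mathbb{S}_{+}^{d}$, so $\bm{M}_{1}$ is symmetric positive definite; and $\bm{M}_{2}=\bm{Q}_{2}\in\mathbb{S}_{+}^{d}$ is in particular symmetric. Hence $\bm{R}=\bm{Q}_{1}^{-1}\bm{Q}_{2}=\bm{M}_{1}\bm{M}_{2}$ is diagonalizable. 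Concretely, with $\bm{N}:=\bm{Q}_{1}^{-\frac{1}{2}}\in\mathbb{S}_{+}^{d}$,
\begin{eqnarray*}
\bm{N}^{-1}\bm{R}\,\bm{N} = \bm{Q}_{1}^{\frac{1}{2}}\bm{Q}_{1}^{-1}\bm{Q}_{2}\bm{Q}_{1}^{-\frac{1}{2}} = \bm{Q}_{1}^{-\frac{1}{2}}\bm{Q}_{2}\bm{Q}_{1}^{-\frac{1}{2}} \in \mathbb{S}_{+}^{d},
\end{eqnarray*}
so $\bm{R}$ is similar to a symmetric positive definite matrix; in particular, $\spec(\bm{R})\subset\mathbb{R}_{+}$.
\end{proof}
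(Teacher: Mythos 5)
Your proof is correct and follows exactly the paper's route: the paper's own proof is the one-line application of Lemma~\ref{AisSPDandBisS} with $\bm{M}_{1}=\bm{Q}_{1}^{-1}$ and $\bm{M}_{2}=\bm{Q}_{2}$, after noting $\bm{Q}_{1}^{-1}\in\mathbb{S}_{+}^{d}$. Your added explicit similarity $\bm{Q}_{1}^{-\frac{1}{2}}\bm{Q}_{2}\bm{Q}_{1}^{-\frac{1}{2}}$ and the positivity of $\spec(\bm{R})$ are also correct, and the paper records that positivity observation immediately after the proposition rather than inside its proof.
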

\begin{proof}
Notice that $\bm{Q}_{1} \in \mathbb{S}^{d}_{+} \Rightarrow \bm{Q}_{1}^{-1} \in \mathbb{S}^{d}_{+}$. Then by Lemma \ref{AisSPDandBisS}, the matrix $\bm{R} := \bm{Q}_{1}^{-1}\bm{Q}_{2}$ is diagonalizable.
\end{proof}

Thanks to Proposition \ref{RisDiagonalizable}, there exist nonsingular matrix $\bm{S}$ and diagonal matrix $\bm{\Lambda}$, such that $\bm{R} = \bm{S}\bm{\Lambda}\bm{S}^{-1}$. Furthermore, the diagonal entries of $\bm{\Lambda}$, denoted as $\lambda_{i}$, $i=1,\hdots,d$, are all positive since 
\begin{eqnarray*}
\{\lambda_{i}\}_{i=1}^{d}  = \spec\left(\bm{R}\right)  &=& \spec\left(\bm{Q}_{1}^{-1}\bm{Q}_{2}^{\frac{1}{2}}\bm{Q}_{2}^{\frac{1}{2}}\right) \nonumber\\
&=&\spec\left(\bm{Q}_{2}^{\frac{1}{2}}\bm{Q}_{1}^{-\frac{1}{2}}\bm{Q}_{1}^{-\frac{1}{2}}\bm{Q}_{2}^{\frac{1}{2}}\right),	
\end{eqnarray*}
where the last step follows from the fact that the spectrum of product of two matrices of same size, remains invariant under the change in order of their multiplication (Theorem 1.3.22 in \cite{HornJohnson2ndEd}).

Substituting $\bm{R} = \bm{S}\bm{\Lambda}\bm{S}^{-1}$ and $\bm{I}=\bm{S}\bm{S}^{-1}$ in (\ref{FOOC}), and using the invariance of trace of a matrix product under cyclic permutation, the first order optimality condition $\frac{\partial}{\partial\beta}\log\det(\bm{Q}(\beta)) = 0$ results the following nonlinear algebraic equation:
\begin{eqnarray}
\displaystyle\sum_{i=1}^{d} \displaystyle\frac{1 - \beta^{2}\lambda_{i}}{1 + \beta\lambda_{i}} = 0,
\label{DerivativeEqualsZero}	
\end{eqnarray}
to be solved for $\beta>0$, with known parameters $\lambda_{i}>0$, $i=1,\hdots,d$. 

If there exists a unique positive root of (\ref{DerivativeEqualsZero}), denoted as $\beta_{+}$, then it would indeed correspond to a minimum for problem (\ref{SuperEllipsoidForMVE2}) since
\begin{eqnarray}
&&\frac{\partial^{2}}{\partial\beta^{2}}\log\det\left(\bm{Q}(\beta)\right) \bigg\rvert_{\beta=\beta_{+}} \nonumber\\
&&= \frac{1}{\beta_{+}(1+\beta_{+})} \sum_{i=1}^{d} \frac{\beta_{+}^{2}\lambda_{i}^{2} + (1+2\beta_{+})\lambda_{i}}{(1 + \beta_{+}\lambda_{i})^{2}} > 0.\quad\qquad
\label{SecondDerivativePositive}
\end{eqnarray}
\noindent That the right-hand-side of (\ref{SecondDerivativePositive}) is positive follows from the fact that both $\beta_{+}$ and $\lambda_{i}$ are positive for all $i=1,\dots,d$. 

\begin{remark}
The algebraic equation (\ref{DerivativeEqualsZero}) we derived is different but consistent with another algebraic equation derived in Appendix A.1 of \cite{MaksarovNorton1996} (see equation (A.11) therein) for the first order optimality condition corresponding to (\ref{SuperEllipsoidForMVE2}). To see the consistency, notice that equation (A.9) in \cite{MaksarovNorton1996} can be re-written in our notation as 
\begin{eqnarray}
\tr\left(\left(\bm{I} + \beta\bm{R}\right)^{-1} \bm{R}\right) = \sum_{i=1}^{d}\frac{\lambda_{i}}{1 + \beta\lambda_{i}} = \frac{d}{\beta(\beta+1)},	
\label{RelatingA11withUs}
\end{eqnarray}
which after partial fraction expansion in $\lambda_{i}$, and using the fact (Theorem 1.3.22 in \cite{HornJohnson2ndEd}) that $\spec(\bm{R}) = \spec(\bm{Q}_{2}\bm{Q}_{1}^{-1})$, results (A.11) in \cite{MaksarovNorton1996}. Combining (\ref{RelatingA11withUs}) above with (A.11) in \cite{MaksarovNorton1996}, indeed results (\ref{DerivativeEqualsZero}). The authors in \cite{MaksarovNorton1996} indirectly argue that (A.11) therein admits unique positive solution by referring to \cite{Chernousko1980PartI}. In this paper, we will instead focus on solving (\ref{DerivativeEqualsZero}) and present numerical algorithms for the same.
\end{remark}

\subsection{Uniqueness of $\beta_{+}$}
Except the trivial case of $d=1$, it is not obvious that (\ref{DerivativeEqualsZero}) admits unique positive root. In the following, we will establish the uniqueness of the positive root for any $d\in\mathbb{N}$. 

For $\beta>0$, we can rewrite (\ref{DerivativeEqualsZero}) as an $(d+1)$\textsuperscript{th} degree polynomial in $\beta$:
\begin{eqnarray}
\displaystyle\sum_{i=1}^{d} \left(1 - \beta^{2}\lambda_{i}\right) \displaystyle\prod_{\stackrel{j=1}{j\neq i}}^{d}\left(1 + \beta\lambda_{j}\right) = 0,\nonumber\\
\Leftrightarrow p_{d+1}(\beta) := \displaystyle\sum_{i=1}^{d} \left(\beta^{2}\lambda_{i} - 1\right) \displaystyle\prod_{\stackrel{j=1}{j\neq i}}^{d}\left(\beta\lambda_{j}+1\right) = 0.
\label{Polynomial}	
\end{eqnarray}
Since $\lambda_{i}>0$ for all $i=1,\hdots,d$, hence $\prod_{i=1}^{d}\lambda_{i}>0$. Dividing both sides of (\ref{Polynomial}) by $\prod_{i=1}^{d}\lambda_{i}$, we then get 
\begin{eqnarray}
\displaystyle\sum_{i=1}^{d}\left(\beta^{2}-\frac{1}{\lambda_{i}}\right)\displaystyle\prod_{\stackrel{j=1}{j\neq i}}^{d}\left(\beta + \frac{1}{\lambda_{j}}\right) = 0.
\label{AfterDivisionBylambdaprod}	
\end{eqnarray}
Let us now define
\begin{eqnarray}
\mu_{r} := \left(d-r\right)e_{r} - \left(r-1\right)e_{r-1}, \quad r=1,\hdots,d-1,
\label{mur}	
\end{eqnarray}
where $e_{r} \equiv e_{r}\left(\frac{1}{\lambda_{1}},\hdots,\frac{1}{\lambda_{d}}\right)$ for $r=1,\hdots,d$, denotes the $r$\textsuperscript{th} \emph{elementary symmetric polynomial} \cite[Ch. 2.22]{HardyLittlewoodPolyaBook} in variables $\frac{1}{\lambda_{1}},\hdots,\frac{1}{\lambda_{d}}$. Specifically, 
\begin{eqnarray*}
e_{r} \equiv e_{r}\left(\frac{1}{\lambda_{1}},\hdots,\frac{1}{\lambda_{d}}\right) \!:=\!\! \displaystyle\sum_{1 \leq i_{1} < i_{2} < \hdots < i_{r} \leq d} \frac{1}{\lambda_{i_{1}}\hdots\lambda_{i_{r}}}.	
\end{eqnarray*}
For example, 
\begin{eqnarray*}
e_{1} = \!\!\displaystyle\sum_{1\leq i \leq d}\!\!\lambda_{i}^{\!-1}, \quad e_{2} = \!\!\!\displaystyle\sum_{1\leq i < j \leq d}\!\!\!\!\left(\lambda_{i}\lambda_{j}\right)^{\!-1},\quad
e_{d} = \!\left(\prod_{1\leq i \leq d}\!\!\lambda_{i}\!\right)^{\!\!\!-1}\!\!\!\!,
\end{eqnarray*}
and $e_{0}=1$ by convention. Notice that (\ref{AfterDivisionBylambdaprod}) can be written in the expanded form
\begin{align}
d\beta^{d+1} &+ \left(d-1\right)e_{1}\beta^{d} + \left(\displaystyle\sum_{r=1}^{d-1}\mu_{r}\beta^{d-r+1}\right) \nonumber\\
&-(d-1)e_{d-1}\beta - de_{d} = 0,
\label{ExpandedForm}	
\end{align}
and that $e_{r} > 0$ for all $r=1,\hdots,d$, since $\{\lambda_{i}\}_{i=1}^{d}>0$.

\begin{lemma}\label{muStrictlyIncreasingSeq}
The sequence $\{\mu_{r}\}_{r=1}^{d-1}$ is strictly increasing.	
\end{lemma}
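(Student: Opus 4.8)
The plan is to reduce the claim to a pointwise inequality between consecutive terms and then to attack that inequality with classical bounds on elementary symmetric polynomials of positive reals. First I would fix an index $r\in\{1,\dots,d-2\}$ and compute the difference directly from the definition in (\ref{mur}):
\[
\mu_{r+1}-\mu_{r}=(d-r-1)e_{r+1}-d\,e_{r}+(r-1)e_{r-1},
\]
with the convention $e_{0}=1$. Thus the lemma is equivalent to the family of strict inequalities $(d-r-1)e_{r+1}+(r-1)e_{r-1}>d\,e_{r}$ for $r=1,\dots,d-2$, where every $e_{s}\equiv e_{s}(1/\lambda_{1},\dots,1/\lambda_{d})$ is a symmetric function of the strictly positive numbers $1/\lambda_{i}$. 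Before the general argument I would settle the degenerate and boundary cases by hand: $d=2$ makes the sequence a singleton and the statement vacuous, and the endpoints $r=1$ (where the $e_{r-1}$ term is absent, so the inequality reads $(d-2)e_{2}>d\,e_{1}$) and $r=d-2$ deserve separate checks.

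For the bulk inequality I would pass to the normalized symmetric means $p_{r}:=e_{r}/\binom{d}{r}$, rewrite the target using the elementary ratios $\binom{d}{r-1}/\binom{d}{r}=r/(d-r+1)$ and $\binom{d}{r+1}/\binom{d}{r}=(d-r)/(r+1)$, and then try to invoke: (i) Newton's inequalities $p_{r}^{2}\ge p_{r-1}p_{r+1}$, strict unless the $1/\lambda_{i}$ are all equal, which make the ratios $e_{r+1}/e_{r}$ strictly decreasing in $r$; and (ii) the compact representation, obtained by summing the identities $\sum_{i}e_{s}(\{1/\lambda_{j}\}_{j\neq i})=(d-s)e_{s}$ and $\sum_{i}(1/\lambda_{i})\,e_{s}(\{1/\lambda_{j}\}_{j\neq i})=(s+1)e_{s+1}$, that $\sum_{r}\mu_{r}\beta^{d-r+1}$ is the ``middle'' part of the polynomial $\beta(\beta+1)f'(\beta)-d\,f(\beta)$ with $f(\beta):=\prod_{i}(\beta+1/\lambda_{i})$, which gives an alternate handle on the coefficients $\mu_{r}$.

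The step I expect to be the main obstacle is precisely the production of a lower bound on $(d-r-1)e_{r+1}+(r-1)e_{r-1}$ in terms of $e_{r}$: the quantity $\mu_{r+1}-\mu_{r}$ is an \emph{additive} combination of three consecutive $e_{s}$, whereas Newton's and Maclaurin's inequalities are \emph{multiplicative} (log-concavity) statements, and the obvious route --- AM--GM on $(d-r-1)e_{r+1}+(r-1)e_{r-1}$ followed by Newton's bound on $\sqrt{e_{r-1}e_{r+1}}$ --- pushes the estimate the wrong way. So the real content will be either a direct combinatorial proof that rewrites $\mu_{r+1}-\mu_{r}$ as a manifestly positive sum indexed by subsets of $\{1,\dots,d\}$ via the two symmetric-function identities above, or an argument that exploits the whole polynomial identity rather than only three of its coefficients. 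If a small instance of the pointwise inequality were to fail, the correct reading of the lemma is presumably that the $\lambda_{i}$ are not all equal (the equal case being exactly where $\bm{Q}_{2}$ is a scalar multiple of $\bm{Q}_{1}$, for which $\beta_{+}=1/\sqrt{\lambda}$ is explicit); in that event I would establish the statement under that non-degeneracy hypothesis and dispatch the scalar-multiple case on its own.
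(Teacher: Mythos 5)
Your computation of the difference is the sound part: from (\ref{mur}), $\mu_{r+1}-\mu_{r}=(d-r-1)e_{r+1}-d\,e_{r}+(r-1)e_{r-1}$, and the step you flag as the main obstacle is not a missing trick --- the inequality $(d-r-1)e_{r+1}+(r-1)e_{r-1}>d\,e_{r}$, and with it the lemma as printed, is false. For $d=3$ and $\lambda_{1}=\lambda_{2}=\lambda_{3}=1$ one has $e_{1}=e_{2}=3$, hence $\mu_{1}=6>\mu_{2}=0$; and your fallback hypothesis (excluding the all-equal case) does not repair it, since $\left(\tfrac{1}{\lambda_{1}},\tfrac{1}{\lambda_{2}},\tfrac{1}{\lambda_{3}}\right)=(1,2,3)$ gives $e_{1}=6$, $e_{2}=11$, so $\mu_{1}=12>\mu_{2}=5$. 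The failure at $r=1$ is a matter of scale rather than degeneracy: the two sides of $(d-2)e_{2}>d\,e_{1}$ have different homogeneity degrees in the $1/\lambda_{i}$, so it is violated whenever the $1/\lambda_{i}$ are small enough, no matter how distinct. Measured against the paper, your algebra is the correct one: the paper's proof expands the difference as $(d-r-1)e_{r+1}+(d-2r)e_{r}+(r-1)e_{r-1}$, which does not follow from (\ref{mur}) (the coefficient of $e_{r}$ must be $-d$), and it then asserts positivity of the factor $(d-2r)$, which in any case fails for $r>d/2$; relatedly, (\ref{ExpandedForm}) double-counts the $\beta^{d}$ term if its inner sum really starts at $r=1$. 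So there is no hidden argument that you failed to reproduce.

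What can be salvaged --- and all that Corollary \ref{AllPosmucoeff} is actually used for --- is the weaker claim that the coefficient sequence of (\ref{AfterDivisionBylambdaprod}) has a single sign change, and there your Newton's-inequalities instinct does work. Write $c_{r}:=(d-r)e_{r}-(r-1)e_{r-1}$ for the coefficient of $\beta^{d+1-r}$ and suppose $c_{r}\leq 0$, i.e., $e_{r}/e_{r-1}\leq (r-1)/(d-r)$. Newton's inequality for the normalized symmetric means gives $e_{r+1}/e_{r}\leq \frac{r(d-r)}{(r+1)(d-r+1)}\,e_{r}/e_{r-1}$, hence $e_{r+1}/e_{r}\leq \frac{r(r-1)}{(r+1)(d-r+1)}\leq \frac{r}{d-r-1}$ (the last step is $(r-1)(d-r-1)\leq (r+1)(d-r+1)$, i.e., $0\leq 2d$), which is exactly $c_{r+1}\leq 0$ (and is immediate when $r\geq d-1$). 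Since $c_{0}=d>0$, $c_{1}=(d-1)e_{1}>0$, while $c_{d}=-(d-1)e_{d-1}<0$ and $c_{d+1}=-d\,e_{d}<0$, the nonzero coefficients change sign exactly once, so Descartes' rule still yields the unique positive root and Theorem \ref{UniquenessThm} stands with this one-sign-change property substituted for Lemma \ref{muStrictlyIncreasingSeq} and Corollary \ref{AllPosmucoeff}. A monotonicity statement for the $\mu_{r}$ themselves would have to be reformulated; as written it cannot be proved because it is not true.
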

\begin{proof}
For $r=1$, using (\ref{mur}) we have 
\begin{eqnarray*}
\mu_{2} - \mu_{1} = (d-2)(e_{2}+e_{1})>0.	
\end{eqnarray*}
Since $2r>r+1$ for all $r\geq 2$, therefore (using (\ref{mur}) again)
\begin{eqnarray*}
&&\mu_{r+1} - \mu_{r} \nonumber\\
&=&\left(d-r-1\right)e_{r+1} + \left(d-2r\right)e_{r} + \left(r-1\right)e_{r-1}\nonumber\\
&=&\left(d-2r\right)\left(e_{r+1}+e_{r}\right) + \left(r-1\right)\left(e_{r+1}+e_{r-1}\right) > 0,
\end{eqnarray*}
as each of the four parenthetical terms above are positive.	
\end{proof}
\begin{corollary}\label{AllPosmucoeff}
The coefficients $\{\mu_{r}\}_{r=1}^{d-1}$ in (\ref{ExpandedForm}) are all positive.	
\end{corollary}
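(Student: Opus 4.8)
The plan is to bootstrap from Lemma \ref{muStrictlyIncreasingSeq}. That lemma already gives the chain $\mu_{1} < \mu_{2} < \cdots < \mu_{d-1}$, so to conclude that every $\mu_{r}$ is positive it suffices to check that the smallest element of the sequence, namely $\mu_{1}$, is positive; positivity of all the remaining terms then follows at once from the strict monotonicity. Note that the index range $r=1,\dots,d-1$ is nonempty only when $d\geq 2$, so we may assume $d\geq 2$ throughout (the case $d=1$ being vacuous, and in any event already handled directly).

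To verify $\mu_{1}>0$ I would simply specialize the definition (\ref{mur}): taking $r=1$ and using the convention $e_{0}=1$ gives $\mu_{1} = (d-1)e_{1} - (1-1)e_{0} = (d-1)e_{1}$. Since every $\lambda_{i}>0$, the first elementary symmetric polynomial $e_{1} = \sum_{i=1}^{d}\lambda_{i}^{-1}$ is strictly positive, and $d-1\geq 1$, hence $\mu_{1} = (d-1)e_{1} > 0$. Combining this with Lemma \ref{muStrictlyIncreasingSeq} yields $0 < \mu_{1} < \mu_{2} < \cdots < \mu_{d-1}$, so in particular all the coefficients $\{\mu_{r}\}_{r=1}^{d-1}$ appearing in the expanded polynomial (\ref{ExpandedForm}) are positive, which is the assertion.

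There is no real obstacle in this argument; it is a one-line consequence of the preceding lemma together with the positivity of $e_{1}$. The only minor points of care are (i) correctly reading off $\mu_{1}=(d-1)e_{1}$ from (\ref{mur}) via the convention $e_{0}=1$, and (ii) noting that the whole statement is trivial when $d=1$.
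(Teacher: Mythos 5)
Your proof is correct and follows essentially the same route as the paper: verify $\mu_{1}=(d-1)e_{1}>0$ from (\ref{mur}) and combine with the strict monotonicity of Lemma \ref{muStrictlyIncreasingSeq} to get $0<\mu_{1}<\cdots<\mu_{d-1}$. Your extra remark that the statement is vacuous for $d=1$ is a harmless refinement not spelled out in the paper.
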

\begin{proof}
Combining Lemma \ref{muStrictlyIncreasingSeq} and that $\mu_{1} = (d-1)e_{1} > 0$, yields the ordering $\mu_{d-1} > \hdots > \mu_{2} > \mu_{1} > 0$. Hence the statement.	
\end{proof}
We are now ready to demonstrate the uniqueness of $\beta_{+}$.
\begin{theorem}\label{UniquenessThm}
The polynomial equation (\ref{ExpandedForm}) (which is equivalent to (\ref{AfterDivisionBylambdaprod}) or (\ref{Polynomial})) has unique positive root $\beta_{+}$.	
\end{theorem}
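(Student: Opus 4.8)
### Proof Strategy

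The plan is to apply Descartes' rule of signs to the polynomial $p_{d+1}(\beta)$ as written in the expanded form (\ref{ExpandedForm}). First I would read off the sign pattern of the coefficients from leading term to constant term. Corollary \ref{AllPosmucoeff} guarantees that the $d-1$ ``middle'' coefficients $\mu_{1},\dots,\mu_{d-1}$ are all strictly positive, and it is already noted that $(d-1)e_1 > 0$ and $d > 0$, so the first $d+1$ coefficients (those of $\beta^{d+1}, \beta^d, \dots, \beta^1$... wait, more carefully: $\beta^{d+1}, \beta^d$, the $\mu_r$-block $\beta^{d-r+1}$ for $r=1,\dots,d-1$, then $\beta^1$ and $\beta^0$) are all positive except for the coefficient of $\beta$, which is $-(d-1)e_{d-1} < 0$, and the constant term, which is $-de_d < 0$. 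Hence the coefficient sequence has sign pattern $(+,+,\dots,+,-,-)$: exactly one sign change. By Descartes' rule of signs, $p_{d+1}$ has exactly one positive real root, counted with multiplicity, and in particular at most one; combined with the existence of at least one positive root (which follows because $p_{d+1}(0) = -de_d < 0$ while the leading coefficient $d>0$ forces $p_{d+1}(\beta)\to+\infty$ as $\beta\to+\infty$, so by the intermediate value theorem a positive root exists), this yields a unique positive root $\beta_+$.

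I would organize the steps as follows: (i) invoke Corollary \ref{AllPosmucoeff} together with the positivity of $e_1,\dots,e_d$ to establish that in (\ref{ExpandedForm}) the coefficients of $\beta^{d+1},\beta^d$ and of every $\beta^{d-r+1}$ for $r=1,\dots,d-1$ are positive, while the coefficients of $\beta^1$ and $\beta^0$ are negative; (ii) conclude the coefficient sequence exhibits exactly one sign change; (iii) apply Descartes' rule of signs to bound the number of positive roots by one; (iv) use $p_{d+1}(0) = -de_d < 0$ and $\lim_{\beta\to\infty} p_{d+1}(\beta) = +\infty$ with continuity to guarantee at least one positive root; (v) combine (iii) and (iv). A small bookkeeping point to verify in step (i) is that the exponents in the $\mu_r$-block, namely $d-r+1$ for $r=1,\dots,d-1$, fill exactly the slots $\beta^d$ down through $\beta^2$ — actually $r=1$ gives $\beta^d$, which collides with the explicitly written $(d-1)e_1\beta^d$ term, so I should double-check whether $\mu_1$ is meant to be the coefficient of $\beta^d$ or whether the $(d-1)e_1\beta^d$ term is separate; reading (\ref{mur}), $\mu_1 = (d-1)e_1$, so the display (\ref{ExpandedForm}) is simply listing that same term twice in different notation, and the honest statement is that the coefficients of $\beta^{d+1},\beta^d,\dots,\beta^2$ are $d, (d-1)e_1 = \mu_1, \mu_2, \dots, \mu_{d-1}$ respectively — all positive — followed by $-(d-1)e_{d-1}$ and $-de_d$.

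I do not anticipate a serious obstacle here; the content is essentially packaged by Lemma \ref{muStrictlyIncreasingSeq} and Corollary \ref{AllPosmucoeff}, and the remaining work is the clean application of Descartes' rule. The only mild subtlety is making sure the sign-change count is genuinely one and not zero — this requires that both trailing coefficients be strictly negative (true, since $e_{d-1},e_d>0$) and that the block of positive coefficients is uninterrupted (true by Corollary \ref{AllPosmucoeff}) — and that we separately argue existence, since Descartes alone gives only an upper bound on the number of positive roots. An alternative, should one wish to avoid Descartes, would be to show that $p_{d+1}(\beta)/\prod_j(\beta + 1/\lambda_j)$ equals $\sum_i (\beta - 1/\sqrt{\lambda_i})(\beta + 1/\sqrt{\lambda_i})/(\beta + 1/\lambda_i)$, which is strictly increasing in $\beta>0$ (each summand is, by a direct derivative check, as already foreshadowed by the second-derivative computation (\ref{SecondDerivativePositive})) and runs from a negative value at $\beta=0^+$ to $+\infty$; monotonicity then gives uniqueness directly. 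I would present the Descartes argument as the primary proof for brevity and mention the monotonicity viewpoint as a remark.
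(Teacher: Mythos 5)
Your proof is correct and takes essentially the same route as the paper: Corollary \ref{AllPosmucoeff} (with $e_r>0$) shows the coefficient sequence of (\ref{ExpandedForm}) has exactly one sign change, and Descartes' rule of signs then gives the unique positive root $\beta_+$. Your side observations are sound but add nothing essential --- the $(d-1)e_1\beta^d$ term is indeed just $\mu_1\beta^d$ restated, and the separate intermediate-value existence argument is unnecessary since exactly one sign change already forces exactly one positive root (the count can only drop by an even number).
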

\begin{proof}
From Corollary \ref{AllPosmucoeff}, we observe that 	only the last two terms (i.e., linear in $\beta$ and constant term) of the $(d+1)$\textsuperscript{th} degree polynomial (\ref{ExpandedForm}) have negative coefficients while all the preceding terms have positive coefficients. In other words, the sequence of coefficients of (\ref{ExpandedForm}) undergoes only one change in signs: from the positive coefficient of $\beta^{2}$ to the negative coefficient of $\beta$. Therefore by Descartes' rule of sign, (\ref{ExpandedForm}) has unique positive root $\beta_{+}$.
\end{proof}

\begin{figure*}[t!]
    \centering
    \begin{subfigure}[t]{0.48\textwidth}
        \centering
        \includegraphics[width=0.9\textwidth]{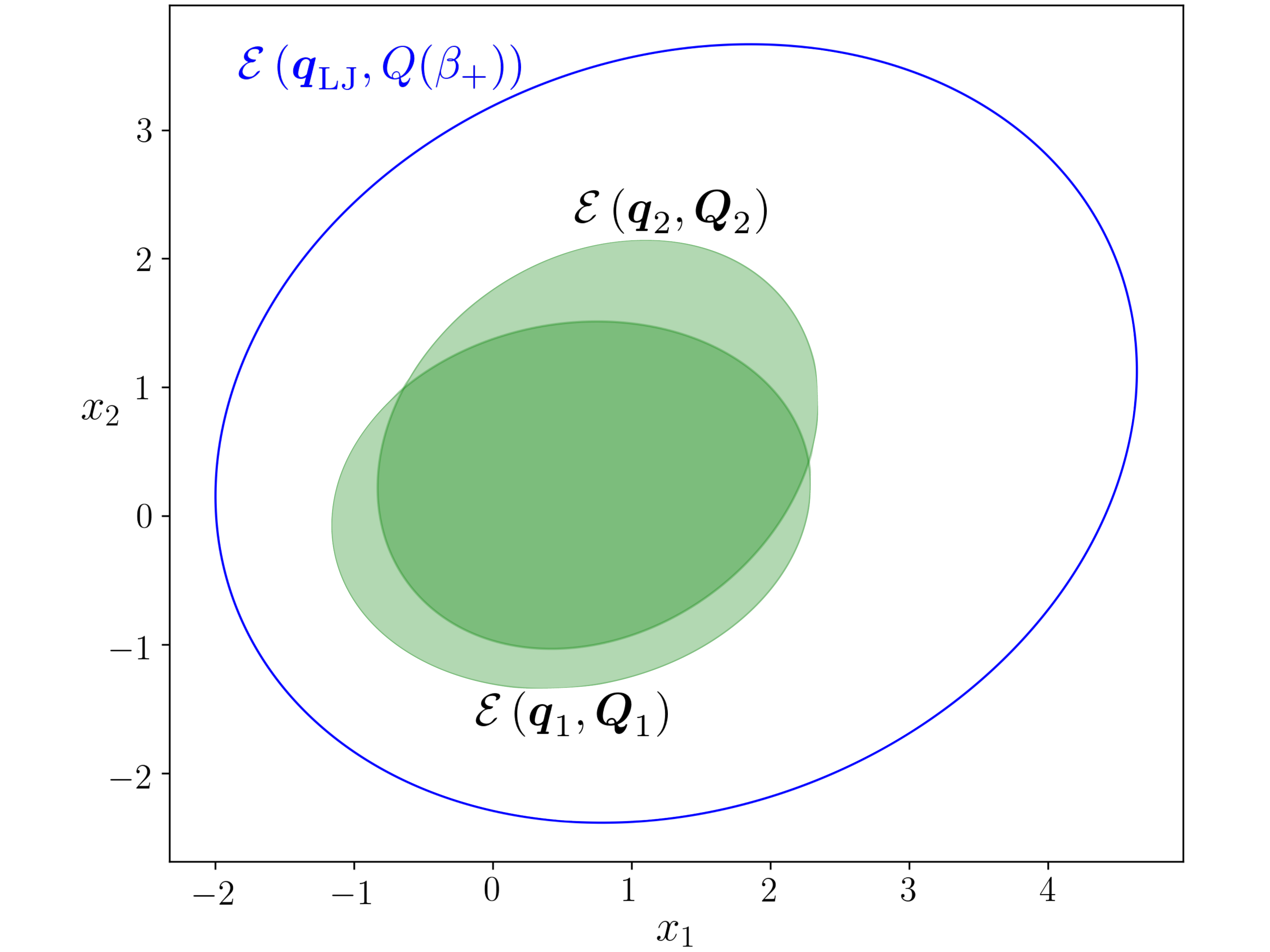}
        \caption{Given two ellipses $\mathcal{E}\left(\bm{q}_{1},\bm{Q}_{1}\right)$ and $\mathcal{E}\left(\bm{q}_{2},\bm{Q}_{2}\right)$ (shown in \emph{green}), we compute the optimal parameterized MVOE $\mathcal{E}\left(\bm{q}_{\rm{LJ}},\bm{Q}(\beta_{+})\right)$ (shown in \emph{blue}) containing the Minkowski sum $\mathcal{E}\left(\bm{q}_{1},\bm{Q}_{1}\right) \dot{+} \mathcal{E}\left(\bm{q}_{2},\bm{Q}_{2}\right)$ using the algorithm given in Section IV.A.1.}
    \end{subfigure}%
    ~ 
    \begin{subfigure}[t]{0.48\textwidth}
        \centering
        \includegraphics[width=0.99\textwidth]{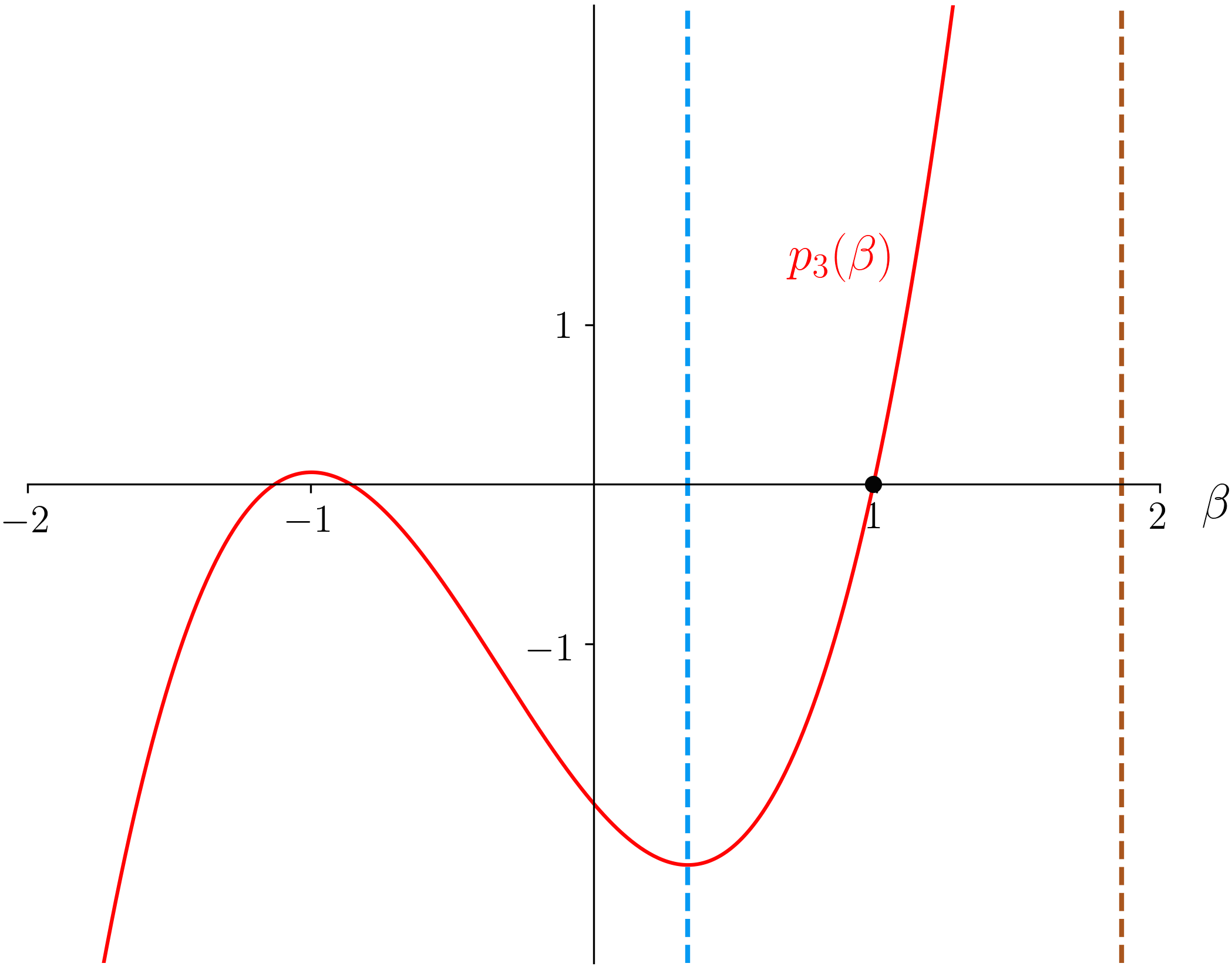}
        \caption{Plot of the function $p_{3}(\beta)$ (in \emph{red}) versus $\beta$, described in Section IV.A.1, for the input ellipses $\mathcal{E}\left(\bm{q}_{1},\bm{Q}_{1}\right)$ and $\mathcal{E}\left(\bm{q}_{2},\bm{Q}_{2}\right)$ in subfigure (a). The \emph{dark circle} is the unique positive root $\beta_{+}$ computed via bisection method using the upper and lower bounds from (\ref{BracketRootPlanarCase}). These bounds are shown above as the \emph{dashed lines}.}
    \end{subfigure}
    \caption{Numerical example depicting the root bracketing algorithm given in Section IV.A.1 for computing $\beta_{+}$.}
\label{RootBracketingFigure}    
\end{figure*}


\section{Algorithms}
In this Section, we present numerical algorithms to solve (\ref{DerivativeEqualsZero}). For the planar case ($d=2$), we present a simple root-bracketing algorithm. For the general case ($d>2$), we derive a fixed point recursion.

\subsection{Bracketing $\beta_{+}$}

\subsubsection{The Planar Case}
Specializing (\ref{Polynomial}) for $d=2$ results a cubic equation
\begin{eqnarray*}
p_{3}(\beta) = 2\lambda_{1}\lambda_{2} \beta^3 + \left(\lambda_{1}+\lambda_{2}\right) \beta^2 - \left(\lambda_{1}+\lambda_{2}\right) \beta - 2 = 0,
\label{PlanarPolynomial}	
\end{eqnarray*}
which by Theorem \ref{UniquenessThm} (alternatively, by directly applying Descartes' rule of sign), has unique positive root $\beta_{+}$. While an explicit expression for $\beta_{+}$ as a function of $\lambda_{1}$ and $\lambda_{2}$ is unwieldy, we next show simple calculations that allow us to bracket the root $\beta_{+}$, thereby facilitating the use of numerical algorithms such as bisection or Newton's method to locate it. To this end, notice that $p_{3}(0) = -2$, $p_{3}^{\prime}(0)=-(\lambda_{1}+\lambda_{2})<0$, $p_{3}^{\prime\prime}(0)=2(\lambda_{1}+\lambda_{2})>0$, which imply that at $\beta=0$, the graph of $p_{3}(\beta)$ is decreasing and concave up; so the root $\beta_{+}$ must be greater than 
\begin{eqnarray*}
\dfrac{\sqrt{(\lambda_{1}+\lambda_{2})(\lambda_{1}+\lambda_{2}+6\lambda_{1}\lambda_{2})}-(\lambda_{1}+\lambda_{2})}{6\lambda_{1}\lambda_{2}},	
\end{eqnarray*}
which is the abscissa of the minimum of $p_{3}(\beta)$. On the other hand, for $i,j=1,2$, setting $\lambda_{i}$ equal to zero, reduces $p_{3}(\beta)$ to a parabola with positive zero 
\begin{eqnarray*}
\dfrac{\lambda_{j}+\sqrt{\lambda_{j}(\lambda_{j}+8)}}{2\lambda_{j}}, \quad {\text{where}}\quad j\neq i.	
\end{eqnarray*}
As a result, we have
\begin{eqnarray}
\dfrac{\sqrt{(\lambda_{1}+\lambda_{2})(\lambda_{1}+\lambda_{2}+6\lambda_{1}\lambda_{2})}-(\lambda_{1}+\lambda_{2})}{6\lambda_{1}\lambda_{2}} < \beta_{+} < \nonumber\\
\min\left\{\dfrac{\lambda_{1}+\sqrt{\lambda_{1}(\lambda_{1}+8)}}{2\lambda_{1}},\dfrac{\lambda_{2}+\sqrt{\lambda_{2}(\lambda_{2}+8)}}{2\lambda_{2}}\right\}.
\label{BracketRootPlanarCase}	
\end{eqnarray}
In Fig. \ref{RootBracketingFigure}, for two constituent ellipses
$\mathcal{E}\left(\bm{q}_{1},\bm{Q}_{1}\right)$ and $\mathcal{E}\left(\bm{q}_{2},\bm{Q}_{2}\right)$ (in \emph{green}, in Fig. \ref{RootBracketingFigure}(a)), we illustrate the optimal parameterized MVOE $\mathcal{E}\left(\bm{q}_{\rm{LJ}},\bm{Q}(\beta_{+})\right)$ (in \emph{blue}, in Fig. \ref{RootBracketingFigure}(a)) containing the Minkowski sum $\mathcal{E}\left(\bm{q}_{1},\bm{Q}_{1}\right) \dot{+} \mathcal{E}\left(\bm{q}_{2},\bm{Q}_{2}\right)$, wherein $\beta_{+}$ is computed via bisection method using the bounds given in (\ref{BracketRootPlanarCase}). The computation of $\beta^{+}$ is depicted in Fig. \ref{RootBracketingFigure}(b).

\subsubsection{The General Case}
It is evident that as $d$ becomes large, generalizing the above approach becomes intractable for higher degree polynomial $p_{d+1}(\beta)$. To circumvent this issue, we next present a fixed point iteration algorithm with guaranteed convergence to $\beta_{+}$.

\subsection{Fixed Point Iteration}
Rewriting the first order optimality condition (\ref{DerivativeEqualsZero}) as 
\begin{eqnarray*}
\beta^{2}\sum_{i=1}^{d}\frac{\lambda_{i}}{1+\beta\lambda_{i}} = \sum_{i=1}^{d}\frac{1}{1+\beta\lambda_{i}},	
\end{eqnarray*}
we consider the following fixed point iteration:
\begin{eqnarray}
\beta_{n+1} = g\left(\beta_{n}\right) := \left(\displaystyle\frac{\sum_{i=1}^{d} \frac{1}{1 + \beta_{n}\lambda_{i}}}{\sum_{i=1}^{d} \frac{\lambda_{i}}{1 + \beta_{n}\lambda_{i}}}\right)^{\!\frac{1}{2}},
\label{FixedPointIteration}	
\end{eqnarray}
where $g : \mathbb{R}_{+} \mapsto \mathbb{R}_{+}$, i.e., $g$ is cone-preserving. By harnessing the nonlinear Perron-Frobenius theory for cone preserving maps \cite{LemmensNussbaumBook2012,KrauseBook2015}, the following theorem ensures that the iteration (\ref{FixedPointIteration}) indeed converges to $\beta_{+}$.
\begin{theorem}
Starting from any initial guess $\beta_{0} \in \mathbb{R}_{+}$, the iteration (\ref{FixedPointIteration}) converges to a unique fixed point $\beta_{+}\in\mathbb{R}_{+}$, i.e., $\displaystyle\lim_{n\rightarrow\infty}g^{n}(\beta_{0})=\beta_{+}$.
\end{theorem}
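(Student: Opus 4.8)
The plan is to recast the iteration map $g$ in logarithmic coordinates and exhibit it as a Banach contraction, so that the Banach fixed-point theorem yields global convergence to a unique fixed point; uniqueness of that fixed point also follows independently from Theorem~\ref{UniquenessThm}, which ties the limit to $\beta_{+}$.

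First I would record two preliminary facts. For $\beta>0$ one has $g(\beta)=\beta$ if and only if $g(\beta)^{2}=\beta^{2}$, i.e. $\beta^{2}\sum_{i}\frac{\lambda_{i}}{1+\beta\lambda_{i}}=\sum_{i}\frac{1}{1+\beta\lambda_{i}}$, which rearranges to $\sum_{i}\frac{1-\beta^{2}\lambda_{i}}{1+\beta\lambda_{i}}=0$, precisely the first-order optimality condition (\ref{DerivativeEqualsZero}); hence by Theorem~\ref{UniquenessThm} the map $g$ has the unique positive fixed point $\beta_{+}$. Moreover $g$ is well defined on $\mathbb{R}_{+}$ and maps it into $(0,\infty)$, since every $\frac{1}{1+\beta\lambda_{i}}$ and every $\frac{\lambda_{i}}{1+\beta\lambda_{i}}$ is strictly positive; if $\beta_{0}=0$ is admitted, a single step already lands in $(0,\infty)$, after which the estimate below applies verbatim.

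Next I would simplify $g$. Writing $a_{i}(\beta):=\frac{1}{1+\beta\lambda_{i}}\in(0,1)$, $b_{i}(\beta):=1-a_{i}(\beta)=\frac{\beta\lambda_{i}}{1+\beta\lambda_{i}}$, and $S(\beta):=\sum_{i=1}^{d}a_{i}(\beta)\in(0,d)$, the identity $\frac{\lambda_{i}}{1+\beta\lambda_{i}}=\frac{1}{\beta}\bigl(1-a_{i}(\beta)\bigr)$ gives $\sum_{i}\frac{\lambda_{i}}{1+\beta\lambda_{i}}=\frac{1}{\beta}\bigl(d-S(\beta)\bigr)$, so $g(\beta)=\bigl(\beta\,S(\beta)/(d-S(\beta))\bigr)^{1/2}$. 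Passing to $u:=\log\beta$ and $h(u):=\log g(e^{u})=\tfrac12\bigl[u+\log S(e^{u})-\log(d-S(e^{u}))\bigr]$, a direct differentiation---using $\beta S'(\beta)=-\sum_{i}a_{i}b_{i}$ and $\tfrac1{S}+\tfrac1{d-S}=\tfrac{d}{S(d-S)}$---gives
\[
h'(u)=\frac12\left(1-\frac{d\,\sum_{i=1}^{d}a_{i}b_{i}}{S\,(d-S)}\right).
\]
Since $\sum_{i}a_{i}b_{i}>0$ we get $h'(u)<\tfrac12$; and since $\sum_{i}a_{i}b_{i}=S-\sum_{i}a_{i}^{2}\le S-\tfrac1d S^{2}=\tfrac1d S(d-S)$ by the Cauchy--Schwarz inequality $\sum_{i}a_{i}^{2}\ge S^{2}/d$, we get $h'(u)\ge 0$. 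Hence $0\le h'(u)\le\tfrac12$ for all $u\in\mathbb{R}$, so $h$ is $\tfrac12$-Lipschitz on $\mathbb{R}$; equivalently $g$ is a contraction of ratio $\tfrac12$ in the Thompson metric $d_{T}(x,y)=\lvert\log x-\log y\rvert$ on the complete space $(0,\infty)$.

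Finally, the Banach fixed-point theorem applied to $h$ on $(\mathbb{R},\lvert\cdot\rvert)$ (equivalently, to $g$ on $\bigl((0,\infty),d_{T}\bigr)$) shows that for any $\beta_{0}\in\mathbb{R}_{+}$ the iterates $g^{n}(\beta_{0})$ converge to the unique fixed point, which by the first step is $\beta_{+}$; that is, $\lim_{n\to\infty}g^{n}(\beta_{0})=\beta_{+}$, which also recovers the nonlinear Perron--Frobenius conclusion for this cone-preserving $g$. I expect the main obstacle to be the uniform two-sided bound on $h'$: the upper bound $h'<1$ (subhomogeneity of $g$) is routine, whereas the lower bound---obtained here via $\sum_{i}a_{i}^{2}\ge S^{2}/d$, which forces $d\sum_{i}a_{i}b_{i}\le S(d-S)$---is the delicate step, since without it one obtains only a nonexpansive map and cannot conclude convergence from an arbitrary starting point.
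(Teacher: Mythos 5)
Your proof is correct, but it takes a genuinely different route from the paper. The paper's own argument is structural: it writes $g=\phi\circ\psi$ with $\phi(x)=\sqrt{x}$ and $\psi(x)=\sum_i f_i/\sum_i\lambda_i f_i$, asserts that $\phi$ and $\psi$ are concave and increasing so that $g$ is a concave increasing self-map of the cone $\mathbb{R}_{+}$, and then invokes nonlinear Perron--Frobenius theory (the first step of the proof of Theorem 2.1.11 in \cite{KrauseBook2015}) to get contractivity in the Hilbert (part) metric, finishing with Banach. You instead make the contraction fully explicit: using $\sum_i\frac{\lambda_i}{1+\beta\lambda_i}=\frac{1}{\beta}\bigl(d-S(\beta)\bigr)$ you reduce the map to $g(\beta)=\bigl(\beta S/(d-S)\bigr)^{1/2}$, pass to $h(u)=\log g(e^{u})$, and bound $h'(u)=\tfrac12\bigl(1-\tfrac{d\sum_i a_ib_i}{S(d-S)}\bigr)$ in $[0,\tfrac12)$, the lower bound coming from Cauchy--Schwarz ($\sum_i a_i^2\ge S^2/d$); your intermediate identities ($\beta S'(\beta)=-\sum_i a_ib_i$, $0<S<d$) check out, so $h$ is a $\tfrac12$-Lipschitz self-map of $(\mathbb{R},\lvert\cdot\rvert)$, equivalently $g$ is a Thompson-metric contraction on $(0,\infty)$, and Banach applies. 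Your version buys an explicit contraction ratio $\tfrac12$ (hence a geometric rate estimate), self-containedness (no appeal to \cite{KrauseBook2015}, and no need to verify the paper's ``not difficult to show'' concavity/monotonicity of $\psi$ -- your Cauchy--Schwarz step is doing exactly that work), and an explicit identification of the limit with $\beta_{+}$ via the equivalence $g(\beta)=\beta\Leftrightarrow$ (\ref{DerivativeEqualsZero}) combined with Theorem \ref{UniquenessThm}, a link the paper leaves implicit; also, on the one-dimensional cone the Hilbert projective metric degenerates, so your use of the Thompson metric $\lvert\log x-\log y\rvert$ is arguably the more precise formulation. The paper's version buys brevity and a general template (concave, increasing, cone-preserving maps) that extends beyond this particular $g$, at the cost of quoting external theory and yielding no explicit contraction constant.
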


\begin{proof}
We know that $g$ is cone preserving. For $\lambda_{i},x>0$, consider the positive convex functions $f_{i}:=\frac{1}{1+x\lambda_{i}}$, and let
\[\phi(x) := \sqrt{x}, \quad \text{and} \quad \psi(x) := \frac{\sum_{i}f_{i}}{\sum_{i}\lambda_{i}f_{i}}.\] 
It is not difficult to show that both $\phi(x)$ and $\psi(x)$ are concave and increasing, and hence \cite[p. 84]{BoydCvxBook} so is $g(\beta_{n}) = \phi(\psi(\beta_{n}))$ as a function of $\beta_{n}$. Consequently (see the first step in the proof of Theorem 2.1.11 in \cite{KrauseBook2015}) $g$ is contractive in Hilbert metric on the cone $\mathbb{R}_{+}$. By Banach contraction mapping theorem, $g$ admits unique fixed point $\beta_{+} \in \mathbb{R}_{+}$ and $\displaystyle\lim_{n\rightarrow\infty}g^{n}(\beta_{0})=\beta_{+}$.
\end{proof}


\begin{figure}[t]
\centering
\includegraphics[width=.45\textwidth]{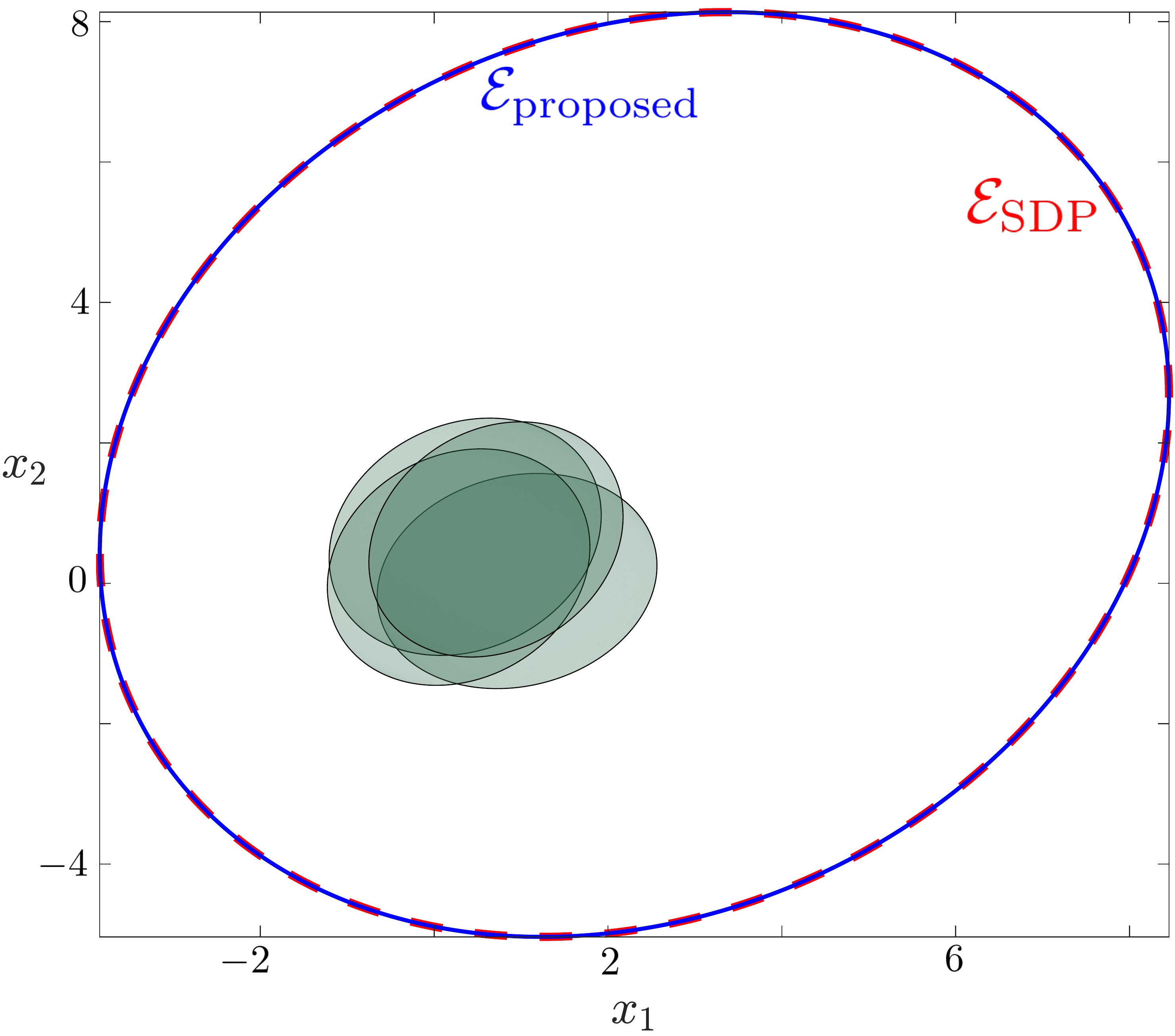}
\caption{The parameterized MVOE $\mathcal{E}_{\text{proposed}}$ (\emph{blue solid line}) is computed for the Minkowski sum of $K=4$ randomly generated ellipses (\emph{green filled}), by recursively applying the algorithm proposed for a pair in Section IV.A.1. The solution $\mathcal{E}_{\text{proposed}}$ matches with the ellipse $\mathcal{E}_{\text{SDP}}$ (\emph{red dashed line}) computed by solving (\ref{BoydSDP})-(\ref{Constr}) via {\texttt{cvx}}.}
\vspace*{-0.15in}
\label{2Dcomparison}
\end{figure}

\begin{figure}[t]
\centering
\includegraphics[width=.48\textwidth]{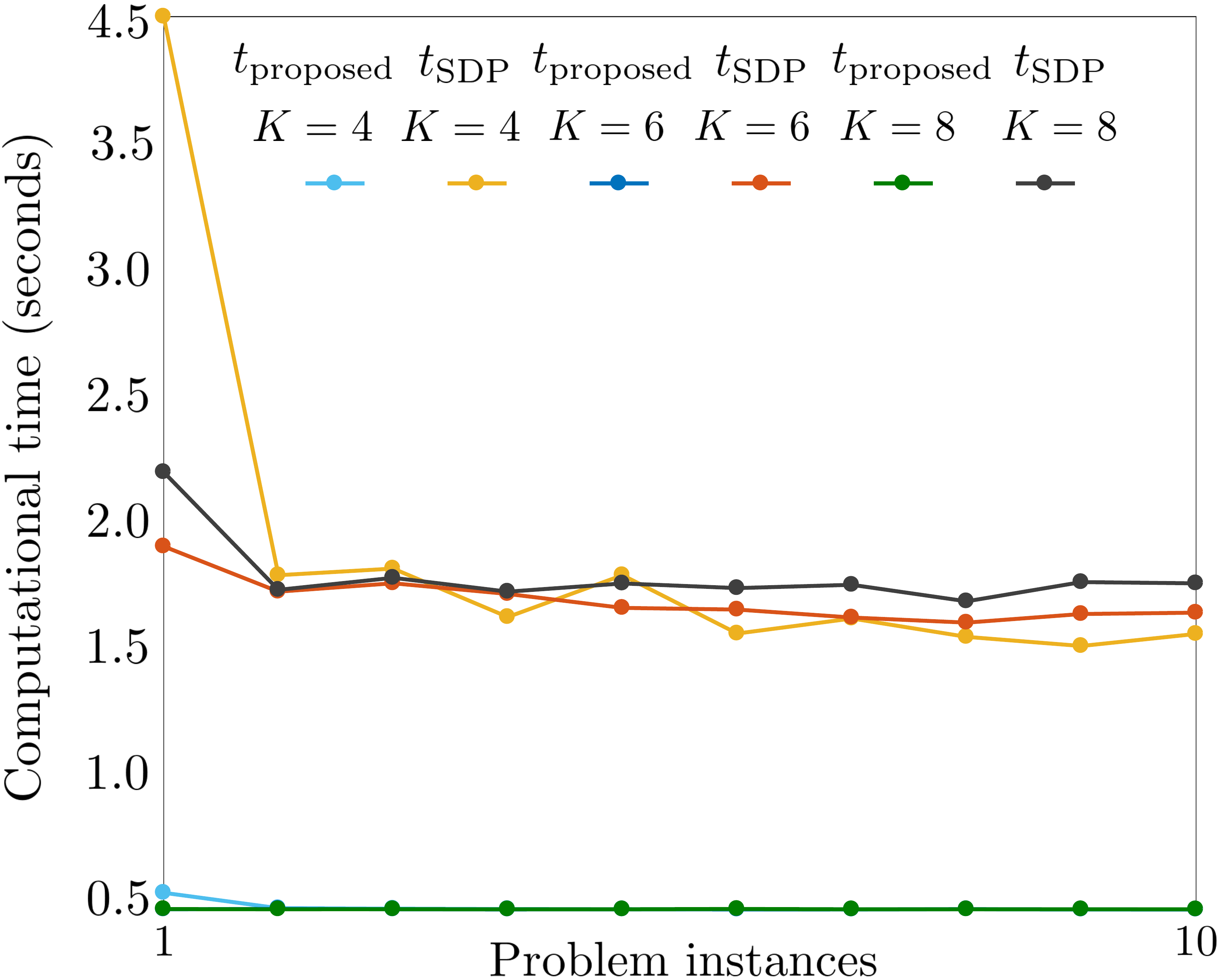}
\vspace*{-0.09in}
\caption{Comparison of computational times $t_{\text{proposed}}$ and $t_{\text{SDP}}$, for the proposed algorithm (Section IV.A.1) and the SDP (\ref{BoydSDP})-(\ref{Constr}) respectively, in solving the 2D MVOE problems for the Minkowski sum of $K=4,6,8$ random ellipses. The above results are for 10 different random problem instances, for each fixed $K$. In all cases, $t_{\text{proposed}} << t_{\text{SDP}}$.}
\vspace*{-0.13in}
\label{2Dstat}
\end{figure}

\section{Numerical Simulations}
In this Section, we will compare the computational performance of the  algorithms proposed in Section IV, with the current state-of-the-art, which is to reformulate the problem of computing MVOE of the Minkowski sum of a given set of ellipsoids as a semi-definite programming (SDP) problem via the $\mathcal{S}$-procedure (see e.g., \cite[Ch. 3.7.4]{BoydLMIBook}). Specifically, given $K$ constituent ellipsoids $\mathcal{E}(\bm{q}_{i},\bm{Q}_{i})$ or equivalently $\mathcal{E}(\bm{A}_{i},\bm{b}_{i},c_{i})$ in $\mathbb{R}^{d}$, $i=1,\hdots,K$, for the Minkowski sum, one solves the SDP problem:
\begin{eqnarray}
\underset{\bm{A}_{0},\bm{b}_{0},\tau_{1},\hdots,\tau_{K}}{\text{minimize}}\;\log\det\,\bm{A}_{0}^{-1}	
\label{BoydSDP}
\end{eqnarray}
subject to
	\begin{subequations}
	\begin{alignat}{2}
		&\bm{A}_{0} \succ \bm{0}, \label{posidef}\\
		&\tau_{k} \geq 0, \qquad k=1,\hdots,K, \label{nonneg}\\
		&\begin{bmatrix}
		\bm{E}_{0}^{\top}\bm{A}_{0}\bm{E}_{0} & \bm{E}_{0}^{\top}\bm{b}_{0} & \bm{0}\\
		\bm{b}_{0}^{\top}\bm{E}_{0} & -1 & \bm{b}_{0}^{\top}\\
		\bm{0} & \bm{b}_{0} & -\bm{A}_{0}	
		\end{bmatrix} \!-\! \displaystyle\sum_{k=1}^{K}\tau_{k}\begin{bmatrix}
	\widetilde{\bm{A}}_{k} & \widetilde{\bm{b}}_{k} & \bm{0}\\
	\widetilde{\bm{b}}_{k}^{\top} & c_{k} & \bm{0}\\
	\bm{0} & \bm{0} & \bm{0}
\end{bmatrix}
 \preceq \bm{0}, \label{LMI}
	\end{alignat}	
	\label{Constr}
\end{subequations}
where we let $\bm{E}_{k}$ to be the $d\times dK$ binary matrix that selects the $k$-th vector, $k=1,\hdots,K$, from the vertical stacking of $K$ vectors, each of size $d\times 1$; and for $k=1,\hdots,K$, define
\[\bm{E}_{0} := \displaystyle\sum_{k=1}^{K}\bm{E}_{k}, \quad \widetilde{\bm{A}}_{k} := \bm{E}_{k}^{\top}\bm{A}_{k}\bm{E}_{k}, \quad \widetilde{\bm{b}}_{k}:=\bm{E}_{k}^{\top}\bm{b}_{k}.\]
The argmin pair $(\bm{A}_{0}^{*},\bm{b}_{0}^{*})$ associated with the SDP (\ref{BoydSDP})-(\ref{Constr}), results the optimal ellipsoid
\[\mathcal{E}_{\text{SDP}} := \mathcal{E}\left(\bm{q}_{\text{SDP}},\bm{Q}_{\text{SDP}}\right),\]
where, using (\ref{Ab2Qq}), $\bm{Q}_{\text{SDP}}:=(\bm{A}_{0}^{*})^{-1}$, and $\bm{q}_{\text{SDP}}:=-\bm{Q}_{\text{SDP}}\bm{b}_{0}^{*}$. Our intent is to compare $\mathcal{E}_{\text{SDP}}$ with $\mathcal{E}_{\text{proposed}}$, given by
\[\mathcal{E}_{\text{proposed}} := \mathcal{E}\left(\bm{q}_{\rm{LJ}}, \bm{Q}(\beta_{+})\right),\]
where $\bm{q}_{\rm{LJ}}$ is defined in (\ref{centerMinkSum}), and $\beta^{+}$ is obtained by recursively applying the algorithms proposed in Section IV pairwise to the given set of shape matrices $\bm{Q}_{1}, \hdots, \bm{Q}_{K}$.
 
While the SDP formulation above is applicable for any dimensions, we will see that the algorithms proposed in Section IV help in reducing computational time without sacrificing accuracy. For comparing numerical performance, we implemented both the SDP (via {\texttt{cvx}}) and our proposed algorithms in MATLAB 2016b, on 2.6 GHz Intel Core i5 processor with 8 GB memory.

\subsection{2D Example}
Since Minkowski sum is associative, we implement a recursive version of the root-bracketing followed by bisection algorithm given in Section IV.A.1, that allows us to compute the parameterized MVOE containing the Minkowski sum for a set of $K>2$ ellipses, by applying the proposed method pairwise. 

In Fig. \ref{2Dcomparison}, we show that for $K=4$ randomly generated ellipses, the optimal MVOE $\mathcal{E}_{\text{proposed}}$ computed via the algorithm proposed in Section IV.A.1, agrees with the optimal MVOE $\mathcal{E}_{\text{SDP}}$ obtained by solving (\ref{BoydSDP})-(\ref{Constr}) using {\texttt{cvx}}, with 
\[\vol\left(\mathcal{E}_{\text{proposed}}\right) = 40.1885, \quad \vol\left(\mathcal{E}_{\text{SDP}}\right) = 40.1884.\]
However, the proposed algorithm entails significant savings in computational time compared to the same needed for solving the SDP; in this case
\[t_{\text{proposed}} = 0.009184 \,\:\text{seconds}, \quad t_{\text{SDP}} = 1.513608 \,\:\text{seconds}.\]
The computational time $t_{\text{SDP}}$ reported above does not include the extra processing times needed for setting up the SDP (e.g. construction of matrices $\bm{E}_{k}$, etc.). This order-of-magnitude speed-up was found to be typical for varying $K$ (Fig. \ref{2Dstat}), and is due to the fact that the proposed algorithm computes a custom bracketing range recursively for each pair under consideration.

\begin{figure}[t]
\centering
\includegraphics[width=.5\textwidth]{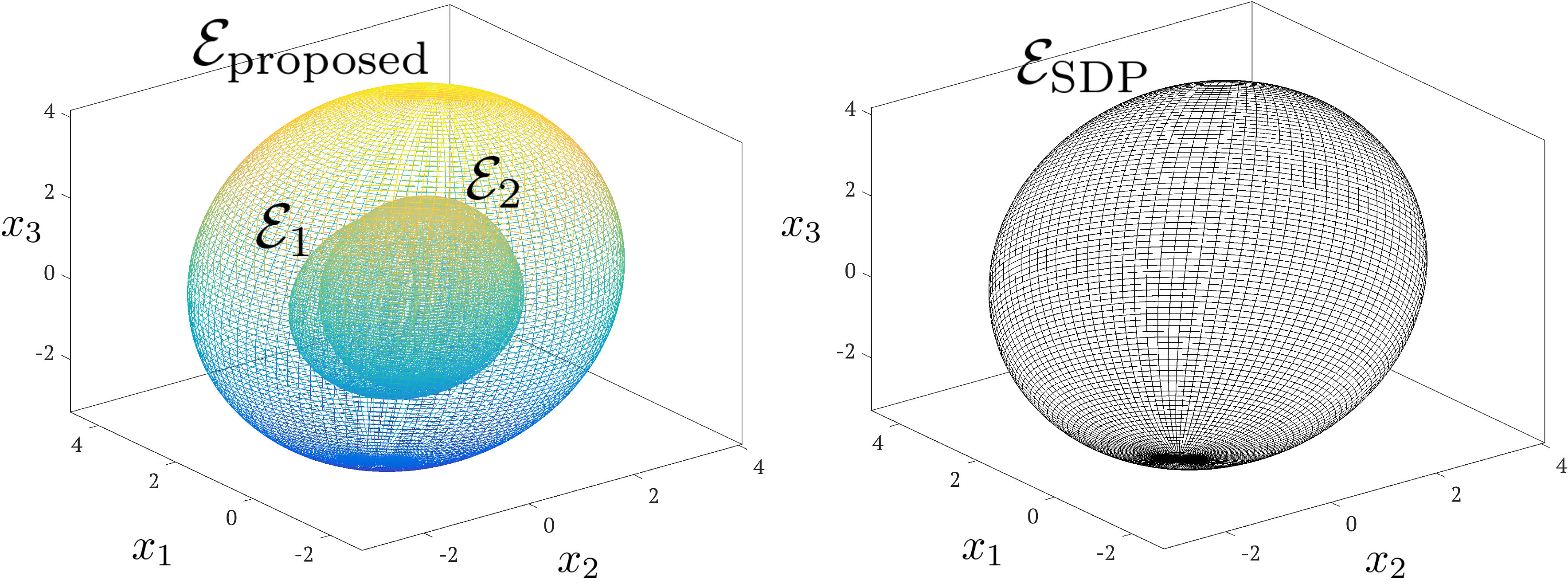}
\vspace*{-0.09in}
\caption{The parameterized MVOE $\mathcal{E}_{\text{proposed}}$ (large ellipsoid on the left subfigure) is shown for the Minkowski sum of K = 2 randomly generated ellipsoids (inner ellipsoids $\mathcal{E}_{1}$ and $\mathcal{E}_{2}$ on the left subfigure) computed using the fixed point recursion (\ref{FixedPointIteration}) described in Section IV.B. The solution $\mathcal{E}_{\text{proposed}}$ matches with the ellipsoid $\mathcal{E}_{\text{SDP}}$ (on the right subfigure) computed by solving (24)-(25) via cvx.}
\vspace*{-0.13in}
\label{3Dcompare}
\end{figure}

\subsection{3D Example}
We only illustrate the solution for $K=2$, since the $K>2$ case can be handled recursively as before. Specifically, for $K=2$ random ellipsoids $\mathcal{E}_{1}$ and $\mathcal{E}_{2}$ in $\mathbb{R}^{3}$ as shown on the left subfigure of Fig. \ref{3Dcompare}, we use the fixed point recursion (\ref{FixedPointIteration}) to compute the parameterized MVOE $\mathcal{E}_{\text{proposed}}$, shown as the large ellipsoid in the left subfigure of Fig. \ref{3Dcompare}. We observe that the $\mathcal{E}_{\text{proposed}}$ thus computed, match with the MVOE obtained by solving the SDP (24)-(25). The SDP solution $\mathcal{E}_{\text{SDP}}$ is shown in the right subfigure of Fig. \ref{3Dcompare}. In this case,
\[\vol\left(\mathcal{E}_{\text{proposed}}\right) = 49.0122, \quad \vol\left(\mathcal{E}_{\text{SDP}}\right) = 49.0121.\]
Again, as in the case of the 2D example, the respective computational times reveal the advantage of the proposed fixed point algorithm:
\[t_{\text{proposed}} = 0.007521 \,\:\text{seconds}, \quad t_{\text{SDP}} = 1.687587 \,\:\text{seconds}.\]
For different problem instances, and varying $K$, we observed computational time statistics similar to Fig. \ref{2Dstat}. We eschew the details for brevity. 


\section{Conclusions}
In this paper, we considered the problem of computing the minimum volume outer ellipsoid (MVOE) of the Minkowski sum of a given set of ellipsoids -- a problem that appears frequently in systems, control and robotics applications. In particular, we focused on computing the so-called inclusion-minimal external parameterized MVOE. We pointed out the equivalence between various forms of such parameterizations appearing in the literature, and provided novel analysis results for the optimality condition. Our analysis led to two new algorithms, which seem to enjoy faster computational time compared to the state-of-the-art semidefinite programming approach of computing the same, without much effect on the numerical quality.


\section*{Acknowledgement}
The author is grateful to John Wayland Bales for suggesting \cite{MSEjwb} the bound (\ref{BracketRootPlanarCase}), and to Suvrit Sra for suggesting \cite{MOsuvrit} the fixed point iteration (\ref{FixedPointIteration}).


\end{document}